\documentclass[12pt,a4paper,oneside,english]{amsart}

\usepackage[T1]{fontenc}
\usepackage[latin9]{inputenc}
\setlength{\parskip}{\medskipamount}
\setlength{\parindent}{0pt}
\usepackage{verbatim}
\usepackage{amsthm}
\usepackage{amstext}
\usepackage{amssymb}
\usepackage{diagrams}
\usepackage{babel}
\usepackage{pinlabel}

\numberwithin{equation}{section}
\numberwithin{figure}{section}
\theoremstyle{remark}
\newtheorem*{acknowledgement*}{Acknowledgement}
\theoremstyle{plain}
\newtheorem{thm}{Theorem}
\theoremstyle{definition}
\newtheorem{defn}[thm]{Definition}
\theoremstyle{plain}
\newtheorem{lem}[thm]{Lemma}
\theoremstyle{definition}
\newtheorem{example}[thm]{Example}
\theoremstyle{remark}
\newtheorem{rem}[thm]{Remark}
\theoremstyle{plain}
\newtheorem{cor}[thm]{Corollary}
\theoremstyle{plain}
\newtheorem{prop}[thm]{Proposition}

\newcommand{\R}{\mathbb{R}}
\newcommand{\Energy}{\mathcal{E}}
\newcommand{\core}{{\bf Core}}
\newcommand{\Core}{{\bf core}}
\newcommand{\im}{{\operatorname{Im}}}
\newcommand{\Cot}{{T}^*}
\newcommand{\id}{\operatorname{id}}
\newcommand{\graph}{\operatorname{gr}}

\begin{document}

\title{Symplectic Microgeometry II:\\ Generating functions\\}

\author{Alberto S. Cattaneo, Benoit Dherin and Alan Weinstein}

\begin{abstract}
We adapt the notion of generating functions for lagrangian submanifolds to symplectic microgeometry. We show that a symplectic micromorphism always admits a global generating function. As an application, we describe hamiltonian flows as special symplectic micromorphisms whose local generating functions are the solutions of Hamilton-Jacobi equations. We obtain a purely categorical formulation of the temporal evolution in classical mechanics.
\end{abstract}

\maketitle
\tableofcontents{}

\section{Introduction}

This article is a continuation of \cite{SM_Microfolds}, in which we introduced the notion of symplectic microfolds and symplectic micromorphisms between them. Recall that a symplectic microfold is essentially the same thing as a germ $[M,A]$ of symplectic manifolds $M$ around a lagrangian submanifold $A$, called the core. A symplectic micromorphism from $[M,A]$ to $[N,B]$ is a germ around the graph of a smooth map $\phi:B\rightarrow A$, the core map, of a canonical relation\footnote{The opposite symplectic manifold $\overline{M}$ of a symplectic manifold $(M,\omega_{M})$ is the manifold $M$ endowed with the opposite symplectic form $-\omega_{M}$.} $V\subset\overline{M}\times N$ containing the graph of $\phi$ and satisfying a certain transversality condition; we review the definitions in Section \ref{subsec-definitions} below.

In contrast with their macroworld counterparts (i.e. canonical relations), symplectic micromorphisms always compose well, forming thus, with the symplectic microfolds as objects, an honest symmetric monoidal category: the microsymplectic category, which can be thought as the appropriate ``microworld'' analog to the symplectic ``category'' of symplectic manifolds and canonical relations \cite{Weinstein1977}. This makes the microsymplectic category a natural setting for questions related to the functorial behavior of quantization schemes in symplectic geometry. 

By ``quantization scheme'', we mean here any well-defined correspondence between a certain type of geometric structures in the symplectic realm (such as Poisson structures) and a certain type of algebraic structures in the realm of analysis (such as $C^{*}$-algebras).  Two important ingredients involved in these quantization questions have been the generating functions and the Fourier integral operators associated with a given lagrangian submanifold (see \cite{BW1997,GS1977}). 

In this paper, we extend the notion of generating functions to the microworld. Our first main result is that \textit{any} symplectic micromorphism admits a \textit{global} generating function. This is the best possible case when it comes to quantization via Fourier integral operators (as will be shown in a sequel \cite{SM_Quantization}). 

A second result (Theorem \ref{thm: normal form}) states that the underlying lagrangian submicrofold of a symplectic micromorphism can always be decomposed into a fibration by graphs of actual smooth map germs. The main example here is the cotangent lift $\Cot\phi:\Cot A\rightarrow\Cot B$ of a smooth map $\phi:B\rightarrow A$, whose underlying lagrangian submicrofold is the germ of the lagrangian submanifold
\[
	\Big\{\Big(\big(p_{1},\,\phi(x_{2})\big),\,\big((\Cot_{x_{2}}\phi)p_{1},\, x_{2}\big)\Big):\,(p_{1},x_{2})\in\phi^{*}(\Cot A)\Big\}
\]
around the graph of $\phi$. When $\phi$ is not a diffeomorphism, $\Cot\phi$ is not the graph of a symplectomorphism, although the cotangent maps $\Cot_{x_{2}}\phi:\Cot_{\phi(x_{2})}A\rightarrow\Cot_{x_{2}}B$ to $\phi$ at each $x_{2}\in B$ \textit{are} actual maps. This gives us a decomposition
\[
	\Cot\phi=\bigcup_{x_{2}\in B}\graph\Cot_{x_{2}}\phi
\]
of $\Cot\phi$ associated with the lagrangian fibration of $\Cot B$ by its cotangent fibers. More generally, a similar decomposition holds for general symplectic micromorphisms from $[M,A]$ to $[N,B]$, which is uniquely associated with the data of a lagrangian fibration of $[N,B]$. This special geometry of the underlying lagrangian submicrofolds may be of help in questions related to the continuity of their corresponding Fourier integral operators. Namely, in various instances (\cite{Eskin1970},\cite{Hormander1971},\cite{Ruz2000}), the continuity in $L^2$-spaces of some classes of FIOs has been related to their wave-fronts being local graphs. From a different perspective, this decomposition is reminiscent of the notion of co-morphisms of Chen and Liu introduced in \cite{CL2007} in the context of Lie groupoids. 

We derive the existence of  global generating functions for symplectic micromorphisms from a more general result: the \textit{equivalence theorem} for clean lagrangian submicrofolds (Theorem \ref{thm: equivalence theorem}). It states that the data of a germ $[L,C]$ of a lagrangian submanifold $L\subset\Cot A$ that intersects the zero section \textit{cleanly} in $C$ is equivalent to the data $(K,f)$ of a symplectomorphism germ \footnote{Given a vector bundle $E\rightarrow A$,  when it is clear from the context, we write $E$ to denote the microfold $[E,Z_{A}]$, where $Z_{A}$ is the zero section of $E$. Moreover, we also identify $Z_{A}$ with $A$ and the submanifolds $C\subset A$ with the corresponding submanifolds of $Z_{A}$, yielding the notation $[E,C]$.}  $K:\Cot N^{*}C\rightarrow[\Cot A,N^{*}C]$ (of the type prescribed by the lagrangian embedding theorem \cite{Weinstein71} applied to the conormal bundle $N^{*}C\subset\Cot A$ and satisfying an extra condition) together with a function germ $f:N^{*}C\rightarrow\R$ around the zero section (and vanishing on it, as well as its differential). Actually, we prove that for each $K$ there exists a unique $f$ such that $L=K(\im\, df)$. This defines the global generating function $f$ of the clean lagrangian submicrofold $[L,C]$ associated with the symplectomorphism germ $K$. 

It turns out that the symplectic micromorphisms from $\Cot A$ to $\Cot B$ with core map $\phi$ are nothing but the \textit{clean} lagrangian submicrofolds in $\Cot(A\times B)$ with core $\graph\phi$. While giving another (and simpler) characterization of symplectic micromorphisms, the equivalence theorem also tells us that they admit global generating functions (associated with special symplectomorphism germs $K$ as before). 

Another application of the equivalence theorem comes from considering the restriction of a symplectic micromorphism to a local chart. The restriction is again a symplectic micromorphism, but now we have a canonical symplectomorphism germ $K$ coming from the affine structure of the local chart. This allows us to define the \textit{local} generating function of the symplectic micromorphism in the local chart as the global generating function of its restriction.

We conclude this paper by associating with any hamiltonian flow on a cotangent bundle a symplectic micromorphism, the evolution micromorphism, which encodes the dynamics for asymptotically short times. We show that the local generating functions of the evolution micromorphism coincide with the solutions of the Hamilton-Jacobi equation for the flow in local charts.   Moreover, by considering a particular monoid object in the microsymplectic category, the energy monoid $\Cot\Energy$, we show that the $\Cot\Energy$-modules are essentially the same thing as germs of hamiltonian flows with time-independent hamiltonian. This gives us a categorical formulation for the temporal evolution in classical mechanics. Finally, we briefly describe how symmetries in classical mechanics can be formalized using the language of symplectic microgeometry. This approach to symmetry, which will be developed in full details somewhere else, is very close in spirit to the work of Benenti on the Hamilton-Jacobi equation for an hamiltonian action (\cite{Benenti1983}).
 
\begin{acknowledgement*}
A.S.C. acknowledges partial support from SNF Grant 200020\_131813/1.
B.D. thanks Pedro de M. Rios for useful discussions on generating
families and acknowledges partial support from NWO Grant 613.000.602 carried out at Utrecht University and from FAPESP grant 2010/15069-8 hosted by the ICMC of Sao Paulo University at Sao Carlos. A.W. acknowledges partial support from NSF grant DMS-0707137 and the hospitality of the Institut Math\'ematique de Jussieu.
\end{acknowledgement*}

\section{Transversality and deformations\label{sec:Transversality-and-deformations}}

In this section, we start by recalling some basic definitions concerning the geometry of manifold germs or microfolds (see \cite{SM_Microfolds,Weinstein71} for more details). Then we focus on the geometry of lagrangian submanifolds in cotangent bundles around their intersection with the zero section. This geometry is captured by the notion of a lagrangian submicrofold $[L,C]$ in $[\Cot A,Z_{A}]$, that is, a germ $[L]$ of lagrangian submanifolds $L\subset\Cot A$ around $C\subset A$. 

The main result of this section is the equivalence theorem (Theorem \ref{thm: equivalence theorem}) which states that the lagrangian submicrofolds $[L,C]$ that intersect the zero section \textit{cleanly} in $C$ coincide with certain deformations of the conormal microbundle $[N^{*}C,C]$. Moreover, the cleanliness assumption is enough to define a notion of global generating function for the lagrangian submicrofold.

\subsection{Definitions}
\label{subsec-definitions}

A \textbf{microfold} is an equivalence class $[G,A]$ of manifold pairs $(G,A)$, where $A$ is a closed submanifold of $G$; two pairs $(G_{1},A)$ and $(G_{2},A)$ are equivalent if there exists a third one $(G_{3},A)$ for which $G_{3}$ is simultaneously an open submanifold of both $G_{1}$ and $G_{2}$. 

The manifold $A$ is called the \textbf{core} of the microfold $[G,A]$. In other words, $[G,A]$ is a manifold germ around $A$. 

A morphism $[\Phi]:[G,A]\to [H,B]$  between microfolds is an equivalence class of smooth maps $\Phi:(G,A)\to
(H,B)$ between representatives, where two such maps are equivalent if there is a common neighborhood of $A$ on which they are both defined, and equal. The morphism is an isomorphism when there is a representative map which is a diffeomorphism. Any such isomorphism induces a diffeomorphism between the cores. 

A \textbf{microbundle} is a microfold $[E,Z_{A}]$ obtained from a vector bundle $E\rightarrow A$ by taking the germ of $E$ around the zero section $Z_{A}$. (Every microfold is diffeomorphic to a microbundle.) When clear from the context, we will write $E$ instead of $[E,Z_{A}]$. Throughout, we will use the canonical identifications
\begin{eqnarray*}
   T_{(0,x)}E        & = & E_{x}\oplus T_{x}A,\\
   T_{(0,x)}Z_{A} & = & \{0\}\oplus T_{x}A,\\
   V_{(0,x)}E       & = & E_{x}\oplus\{0\},
\end{eqnarray*}
where the \textbf{vertical} \textbf{bundle} $VE\rightarrow E$ of a vector bundle $\pi:E\rightarrow A$ is the subbundle of $TE$ whose fiber at $e\in E$ is the kernel of $T_{e}\pi$. If $C$ is a submanifold of $A$ and $E\rightarrow A$ a vector bundle, we will write $E_{|C}$ to denote the restriction of $E$ along $C$. 

A \textbf{submicrofold} $[H,B]$ of $[G,A]$ is a microfold for which there are representatives $(H,B)$ and $(G,A)$ such that $B$ is a submanifold of $A$ and $H$ is a submanifold of $G$, and $H\cap A=B$. We say that $[H,B]$ is a \textbf{clean} submicrofold if the intersection of $H$ with the core $A$ is clean (i.e. $TH\cap TA=TB$).

A \textbf{symplectic microfold} is a microfold $[M,A]$ such that $M$ is a symplectic manifold and $A$ is a lagrangian submanifold. 

A \textbf{lagrangian splitting} of a symplectic microfold $[M,A]$ is a lagrangian subbundle $K\rightarrow A$ of $TM|_A$ that is transverse to $A$.

A \textbf{lagrangian submicrofold} $[L,C]$ of $[M,A]$ is a submicrofold such that $L$ is lagrangian in $M$. 

A \textbf{symplectic micromorphism} from $[M,A]$ to $[N,B]$ is a lagrangian submicrofold 
\[
	[V,\graph \phi] \subset [\overline{M}\times N, A\times B],
\] 
whose core is the graph of a smooth map $\phi:B\rightarrow A$ and that satisfies the following {\it transversality condition} :
We require  $TV$ to be transverse along $\graph \phi$  to a (and hence any) lagrangian subbundle of the form 
\[
	(TA\oplus 0)\times (0\oplus K),
\]
where $K$ is a lagrangian splitting of $[N,B]$.

To distinguish symplectic micromorphisms from other lagrangian submicrofolds, we use the special notation $([V],\phi):[M,A]\rightarrow [N,B] $ instead of $[V,\graph\phi]$.

\begin{rem}
The definition of symplectic micromorphisms above differs from the one given in \cite[Def. 3.1]{SM_Microfolds} but is equivalent to it as stated in \cite[Cor 3.1]{SM_Microfolds}. It is better suited to our purposes since we will mostly be dealing here with symplectic micromorphisms $([V],\phi):\Cot A\rightarrow \Cot B$ between cotangent microbundles. In this case, we have a canonical lagrangian subbundle
\begin{eqnarray}
	\Lambda & := & (TZ_{A}\oplus 0)\times (0\oplus V(\Cot B)),\label{eq:lagr-distrib}
\end{eqnarray}
where  $V(\Cot B)$  is the vertical subbundle of $\Cot B$. The symplectic micromorphisms from $\Cot A$ to $\Cot B$ are thus the lagrangian submicrofolds $[V,\graph\phi]$ that are transverse to $\Lambda$. 
\end{rem}

The symplectic microbundles and symplectic micromorphisms between them form a symmetric monoidal category; see \cite{SM_Microfolds}. Let us recall here that the tensor product of two microfolds is derived from the usual  cartesian product of manifolds in the obvious way:
\[
	[M,A]\otimes [N,B] := [M\times N, A\times B].
\]
In turn, the tensor product of two symplectic micromorphisms is given by the tensor product of their underlying submicrofolds as above.

\subsection{Equivalence theorem}

We now define two \textit{a priori} different classes of lagrangian submicrofolds $[L,C]\subset\Cot A$, each of which has the conormal microbundle $N^{*}C$ as one of its members: the class of \textit{strongly transverse lagrangian submicrofolds} and the class of \textit{conormal microbundle deformations}. Theorem \ref{thm: equivalence theorem} proves that these classes coincide with one another and with the class of clean lagrangian submicrofolds. 

\begin{defn}
\label{def: W-distribution}Let $C$ be a submanifold of $A$ and let $W\rightarrow C$ be a complementary subbundle to $TC$ in $TA_{|C}$. We define the subbundle $\Lambda^{W}\rightarrow C$ of $T\Cot A_{|C}$ by setting
\begin{equation}
	\Lambda^{W}:=W^{0}\oplus W,\label{eq: W-distribution}
\end{equation}
where $W_{c}^{0}$ is the annihilator of $W_{c}$ (i.e. the subspace of the covectors in $\Cot_{c}A$ that vanish on $W_{c}$) for each $c\in C$. 
\end{defn}

\begin{lem}
\label{lem: W-distributions}All lagrangian subbundles of $T\Cot A_{|C}$ of the form \eqref{eq: W-distribution} are transverse to $N^{*}C$ along $C$.
\end{lem}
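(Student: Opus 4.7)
The plan is to reduce the claim to a pointwise linear algebra check at each $c\in C$, using the canonical identification $T_{(0,c)}\Cot A = \Cot_{c}A \oplus T_{c}A$ recalled in the definitions, and then verify the direct sum decomposition by a dimension count.

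First I would fix $c\in C$ and unpack the two subspaces under this identification. The conormal bundle $N^{*}C\subset\Cot A$ is itself a vector bundle over $C$ with fiber $(T_{c}C)^{0}$, so as a microbundle its tangent space along the zero section is
\[
    T_{(0,c)}N^{*}C \;=\; (T_{c}C)^{0}\oplus T_{c}C.
\]
On the other hand, by definition $\Lambda^{W}_{c}=W_{c}^{0}\oplus W_{c}$. The task is to show that the sum
\[
    \Lambda^{W}_{c}+T_{(0,c)}N^{*}C \;=\; \bigl(W_{c}^{0}+(T_{c}C)^{0}\bigr)\oplus\bigl(W_{c}+T_{c}C\bigr)
\]
fills all of $T_{(0,c)}\Cot A = \Cot_{c}A\oplus T_{c}A$.

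The ``base'' summand is immediate: since $W$ was chosen complementary to $TC$ in $TA_{|C}$, we have $T_{c}A=T_{c}C\oplus W_{c}$. For the ``cotangent'' summand I would use the dual decomposition: any $\alpha\in\Cot_{c}A$ decomposes uniquely as $\alpha=\alpha_{1}+\alpha_{2}$ where $\alpha_{1}$ vanishes on $W_{c}$ and $\alpha_{2}$ vanishes on $T_{c}C$, which expresses
\[
    \Cot_{c}A \;=\; W_{c}^{0}\oplus (T_{c}C)^{0}.
\]
(Injectivity is clear since $W_{c}^{0}\cap(T_{c}C)^{0}$ consists of covectors vanishing on $T_{c}C\oplus W_{c}=T_{c}A$; the dimensions add up to $\dim C + (\dim A-\dim C)=\dim A$.) Combining both decompositions gives the transversality at $c$, and as $c$ varies over $C$ this is the desired transversality along $C$.

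The argument is essentially a pointwise fact about annihilators, so I do not expect any real obstacle; the only point that needs care is the use of the canonical splitting $T_{(0,c)}\Cot A=\Cot_{c}A\oplus T_{c}A$ recalled in Section~\ref{subsec-definitions}, which ensures that the tangent spaces to $N^{*}C$ and to $\Lambda^{W}$ can be read off directly from their fibrewise descriptions.
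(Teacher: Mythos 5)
Your proof is correct and follows essentially the same route as the paper's: identify $T_{(0,c)}N^{*}C=(T_{c}C)^{0}\oplus T_{c}C$ and $\Lambda^{W}_{c}=W_{c}^{0}\oplus W_{c}$ under the canonical splitting, and deduce transversality from the fact that complementary subspaces $W_{c}\oplus T_{c}C=T_{c}A$ have complementary annihilators in $\Cot_{c}A$. The only thing the paper does in addition is verify that the subbundles $\Lambda^{W}$ are in fact lagrangian (via the symplectic orthogonal and a dimension count); this is not needed for the transversality assertion itself, which your argument establishes for any subbundle of the form $W^{0}\oplus W$, but it is what makes the lemma and the subsequent definition of strong transversality non-vacuous.
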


\begin{proof}
One checks that the symplectic orthogonal $(\Lambda^{W})_{(0,c)}^{\perp}$ is contained in $\Lambda_{(0,c)}^{W}$ , and we conclude $\Lambda_{(0,c)}^{W}$ is a lagrangian subspace by dimension count. To see that $\Lambda^{W}$ is transverse to $N^{*}C$, we observe that $T_{(0,c)}N^{*}C=N_{c}^{*}C\oplus T_{c}C$. By definition, $W_{c}$ is transverse to $T_{c}C$. This implies that their respective annihilator $W_{C}^{0}$ and $\mbox{\ensuremath{N_{c}^{*}C}}$ are also transverse in $T_{c}^{*}A$. 
\end{proof}

\begin{defn}
We say that a lagrangian submicrofold $[L,C]$ of $\Cot A$ is \textbf{strongly transverse} if it is transverse to all lagrangian subbundles of the form \eqref{eq: W-distribution}. 
\end{defn}

\begin{example}
The conormal bundle $N^{*}C$ is strongly transverse. 
\end{example}

Let $\theta$ be a one-form on $A$, which we regard as a map $\theta:A\rightarrow\Cot A$. Its image $\im\,\theta$ is a lagrangian submanifold of $\Cot A$ if and only if $\theta$ is closed. We say that $\im\,\theta$ is a \textbf{projectable} lagrangian submanifold of $\Cot A$ since the restriction of the canonical projection $\Cot A\rightarrow A$ to $\im\,\theta$ is a diffeomorphism. Conversely, all projectable lagrangian submanifolds of $\Cot A$ are of this form. 

Now if $\theta$ vanishes on some submanifold $C\subset A$, then $\im\,\theta$ contains $C$ (or, more precisely, contains the corresponding submanifold of the zero section $Z_{A}$), and we can consider the induced lagrangian submicrofold $[\im\:\theta,C]$ , which depends only on the germ of $\theta$ around $C$.

The class of lagrangian submicrofolds $[L,C]$ in $\Cot A$ whose image through some special type of symplectomorphism germ from $[\Cot A,N^{*}C]$ to $\Cot N^{*}C$ is projectable will be very important for us in the sequel:

\begin{defn} \label{def: conormal deformation}
We say that a lagrangian submicrofold $[L,C]$ is a\textbf{ deformation} of $N^{*}C$ (or a conormal microbundle deformation) if, for all symplectomorphism germs
\begin{equation}
	[K]:\Cot N^{*}C\longrightarrow[\Cot A,N^{*}C] \label{eq: lag. embedding germ}
\end{equation}
fixing the core and such that the image by $TK$ of the vertical distribution along $C$ in $\Cot N^{*}C$ is of the form \eqref{eq: W-distribution}, there is a germ $[\beta]$ around $C$ of a closed one-form $\beta\in\Omega^{1}(N^{*}C)$ vanishing on $C$ and such that
\begin{equation}
	L=(K\circ T_{\beta}\circ K^{-1})(N^{*}C),\label{eq: sympl. rotation}
\end{equation}
where $T_{\beta}$ is the symplectomorphism germ on $[\Cot N^{*}C,C]$ obtained from $\beta$ by fiber translation as illustrated in Figure \ref{deformation}.
\end{defn}

\begin{rem}
\eqref{eq: sympl. rotation} is equivalent to $L=K(\im\beta)$. 
\end{rem}

\begin{figure}[h!] 
\labellist \small\hair 2pt 
\pinlabel $\Cot A$ at 20 180 
\pinlabel $N^*C$ at 110 230
\pinlabel $V(\Cot N^*C)$ at 470 230  
\pinlabel $[K]$ at 280 180 
\pinlabel $\Cot N^*C$ at 550 180 
\pinlabel $L$ at 187 160 
\pinlabel $T_\beta$ at 400 130 
\pinlabel $A$ at 187 95
\pinlabel $C$ at 123 103
\pinlabel $C$ at 460 100
\pinlabel $N^*C$ at 370 95 
\pinlabel $T_\beta$ at 540 80 
\pinlabel $\Lambda^W$ at 180 10 
\endlabellist 
\centering 
\includegraphics[scale=0.6]{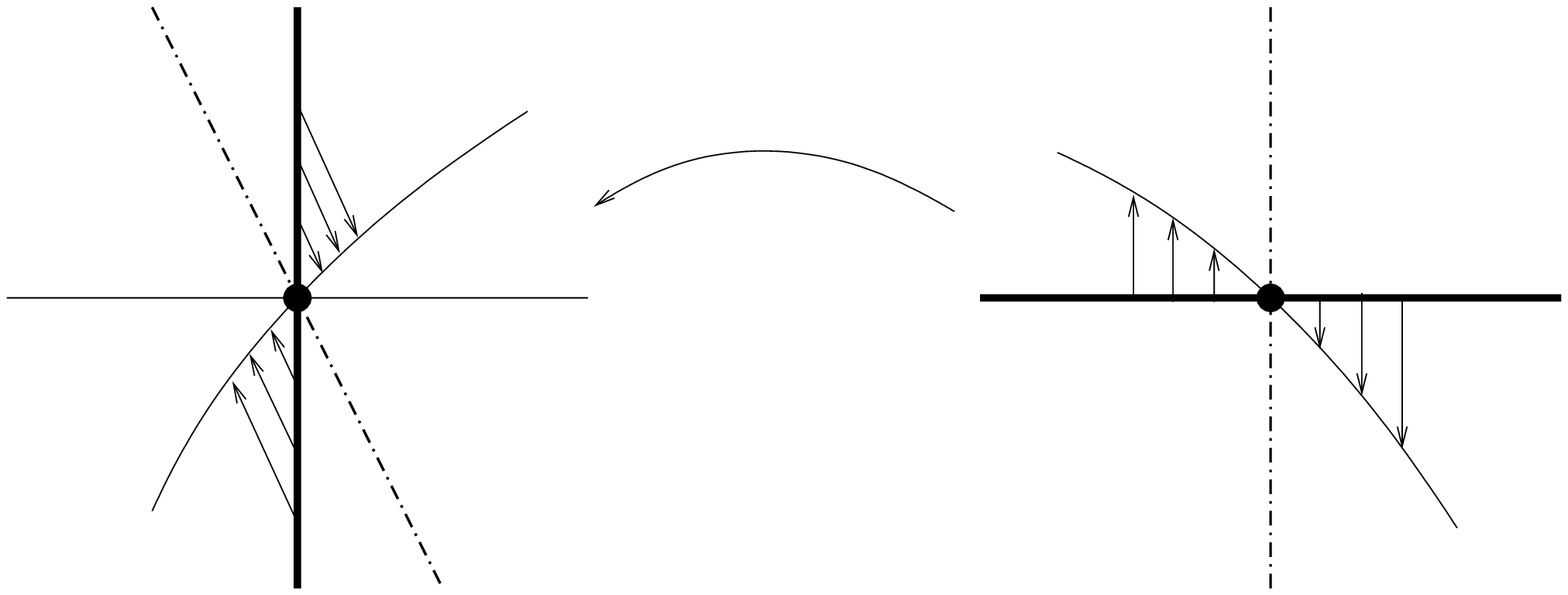} 
\caption{\label{deformation}}
\end{figure}

\begin{thm}\label{thm: equivalence theorem}
Let $[L,C]$ be a lagrangian submicrofold of $[\Cot A,Z_{A}]$. Then the following statements are equivalent:

(1) $[L,C]$ is clean,

(2) $[L,C]$ is strongly transverse,

(3) $[L,C]$ is a deformation of $N^{*}C$.
\end{thm}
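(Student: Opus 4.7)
The plan is to establish $(1)\Leftrightarrow(2)$ by a pointwise linear-algebra argument at points of $C$, and $(2)\Leftrightarrow(3)$ by transporting $L$ through the symplectomorphism germ $K$ and invoking the inverse function theorem for germs. Throughout I identify $c\in C$ with $(0,c)\in Z_{A}$.

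For $(1)\Leftrightarrow(2)$, I would work at a fixed $c\in C$ with the splitting
\[
   T_{(0,c)}\Cot A \;=\; \bigl(N_{c}^{*}C\oplus W_{c}^{0}\bigr)\oplus\bigl(T_{c}C\oplus W_{c}\bigr)
\]
valid for any complement $W$ of $TC$ in $TA_{|C}$. Since $T_{c}L$ is lagrangian and contains $T_{c}C$, symplectic orthogonality forces its projection onto $\Cot_{c}A$ to land in $(T_{c}C)^{0}=N_{c}^{*}C$. Because $N_{c}^{*}C\cap W_{c}^{0}=0$, any element of $T_{c}L\cap\Lambda_{c}^{W}$ must have vanishing covector component, and the transversality question reduces to whether $T_{c}L\cap(\{0\}\oplus W_{c})=0$. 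Cleanness ($T_{c}L\cap T_{(0,c)}Z_{A}=T_{c}C$) forces this for every $W$; conversely, if cleanness fails at $c$, any witness vector $w\in T_{c}A\setminus T_{c}C$ with $(0,w)\in T_{c}L$ can be placed in some complement $W$, yielding a nonzero element of $T_{c}L\cap\Lambda_{c}^{W}$.

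For $(2)\Rightarrow(3)$, I would fix an admissible germ $K:\Cot N^{*}C\to[\Cot A,N^{*}C]$ with $TK(V\Cot N^{*}C)=\Lambda^{W}$ for the associated $W$, and transport $L$ to the lagrangian submicrofold $K^{-1}([L,C])$ of $[\Cot N^{*}C,C]$. Transversality of $L$ to $\Lambda^{W}$ is equivalent, via $K$, to transversality of $K^{-1}(L)$ to the vertical along $C$; since both are lagrangian of top rank, the intersection is trivial, so the bundle projection $\Cot N^{*}C\to N^{*}C$ restricts to a local diffeomorphism germ from $K^{-1}(L)$ onto a neighborhood of $C$ in $N^{*}C$. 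Hence $K^{-1}(L)=\im\beta$ for a unique $1$-form germ $\beta$ on $N^{*}C$ around $C$, closed because $K^{-1}(L)$ is lagrangian and vanishing on $C$ because $K^{-1}(L)$ contains $C$ as part of the zero section.

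For $(3)\Rightarrow(2)$, given any complement $W$ the lagrangian embedding theorem, applied to $N^{*}C\subset\Cot A$ with tubular data prescribing $\Lambda^{W}$ as a horizontal complement to $TN^{*}C$ along $C$, produces an admissible $K$ with $TK(V\Cot N^{*}C)=\Lambda^{W}$. Since $\beta$ vanishes on $C$, a direct computation at $c\in C$ gives $T_{(0,c)}\im\beta=\{(d_{c}\beta(v),v):v\in T_{c}N^{*}C\}$, meeting $V_{(0,c)}\Cot N^{*}C$ only at zero; pushing forward by $K$ yields $L\pitchfork\Lambda^{W}$ along $C$. The main obstacle is precisely this parametric version of the lagrangian embedding theorem: for every prescribed $W$ one must produce a symplectomorphism germ (fixing the core $N^{*}C$) whose differential sends the vertical distribution to $\Lambda^{W}$. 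This is a refinement of Weinstein's tubular neighborhood construction where the tubular embedding is chosen with horizontal distribution matching $W$, and Definition~\ref{def: conormal deformation} is formulated precisely so that the argument can invoke it as a black box.
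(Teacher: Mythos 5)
Your proposal is correct and follows essentially the same route as the paper's proof: for $(1)\Leftrightarrow(2)$ both arguments reduce transversality to the pointwise condition $T_{(0,c)}L\cap(0\oplus W_{c})=\{0\}$ (you via the observation that the covector components of $T_{(0,c)}L$ lie in $N^{*}_{c}C$, the paper via symplectic orthogonals and the identity $(E+F)\cap G=E\cap G+F$ for $F\subset G$), and for $(2)\Leftrightarrow(3)$ both transport $L$ through $K$, use projectability of $K^{-1}(L)$ onto the zero section of $\Cot N^{*}C$, and invoke the lagrangian embedding theorem with a prescribed transverse lagrangian complement for the converse. The ``parametric'' refinement of the lagrangian embedding theorem that you flag as the main external input is cited as a black box in the paper as well, so this is not a gap relative to the paper's own standard of rigor.
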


\begin{proof}
We first recall that, for  three subspaces, $E,F$ and $G$, of a given vector space, we have 
\begin{equation}
	(E+F)\cap G=E\cap G+F\quad\textrm{iff}\quad F\subset G.\label{eq: pseudo distributivity}
\end{equation}
We start by showing that cleanliness is equivalent to strong transversality.

$(1)\Rightarrow(2)$: Let $\Lambda^{W}\rightarrow C$ be a lagrangian subbundle as in \eqref{eq: W-distribution}. Using $T_{c}A=W_{c}\oplus T_{c}C$ and identity \eqref{eq: pseudo distributivity}, we see that the cleanliness assumption is equivalent to
\begin{equation}
	0\oplus T_{c}C=(0\oplus W_{c})\cap T_{(0,c)}L+0\oplus T_{c}C,\label{eq: equivalent cleaness}
\end{equation}
since $0\oplus T_{c}C$ is contained in $T_{(0,c)}L$. The transversality of $W_{c}$ and $T_{c}C$ implies that the two terms in the R.H.S. of \eqref{eq: equivalent cleaness} intersect only in $\{0\}$. Therefore we can conclude that
\begin{equation}
	(0\oplus W_{c})\cap T_{(0,c)}L=\{0\}.\label{eq: x}
\end{equation}
By taking the symplectic orthogonal of \eqref{eq: x}, we obtain
\begin{eqnarray*}
   T_{(0,c)}\Cot A & = & W_{c}^{0}\oplus T_{c}A+T_{(0,c)}L,\\
                            & = & W_{c}^{0}\oplus T_{c}C+W_{c}^{0}\oplus W_{c}+T_{(0,c)}L,\\
                            & = & W_{c}^{0}\oplus W_{c}+T_{(0,c)}L,
\end{eqnarray*}
where the last equality comes from the identity $W_{c}^{0}\oplus T_{c}C=W_{c}^{0}\oplus0+0\oplus T_{c}C$ whose first term is contained in $W_{c}^{0}\oplus W_{c}$ while its second term is contained in $T_{(0,c)}L$. 

$(2)\Rightarrow(1)$: The strong transversality assumption implies that $L$ is transverse to all lagrangian subbundles $\Lambda^{W}\rightarrow C$ as in \eqref{eq: W-distribution}. Choose one. The symplectic orthogonal of the transversality condition yields 
\begin{equation}
	T_{(0,c)}L\cap(W_{c}^{0}\oplus W_{c})=\{0\},\label{eq:y}
\end{equation}
and, because $0\oplus W_{c}^{0}$ is contained in $W_{c}^{0}\oplus W_{c}$, we have that $T_{(0,c)}L$ intersects $0\oplus W_{c}$ only in $\{0\}$. Now, using this together with \eqref{eq: pseudo distributivity}, we obtain
\begin{eqnarray*}
   T_{(0,c)}L\cap(0\oplus T_{c}A) & = & T_{(0,c)}L\cap(0\oplus W_{c}+0\oplus T_{c}C),\\
                                                    & = & T_{(0,c)}L\cap(0\oplus W_{c})+0\oplus T_{c}C,\\
                                                    & = & 0\oplus T_{c}C,
\end{eqnarray*}
which proves that the intersection $L\cap Z_{A}=C$ is clean. 

The equivalence between $(2)$ and $(3)$ is almost clear from the definitions. Namely, the tangent map of a symplectomorphism germ $K$ as in Definition \ref{def: conormal deformation} maps the vertical bundle in $\Cot N^{*}C$ along $C$ to a lagrangian subbundle $\Lambda^{W}\rightarrow C$ of the form \eqref{eq: W-distribution}. Clearly, a lagrangian submicrofold $[L,C]$ of $\Cot A$ is transverse to $\Lambda^{W}\rightarrow C$ if and only if $[K^{-1}(L),C]$ is transverse to the vertical distribution along $C$. By continuity, this is equivalent to the existence of a (small enough) representative $L$ such that $K^{-1}(L)$ is transverse to the vertical distribution, or, in other words, such that $K^{-1}(L)$ is projectable onto the zero section in $\Cot N^{*}C$. With this in mind, the implication $(2)\Rightarrow(3)$ is clear. The converse follows from the lagrangian embedding theorem which guarantees the existence of a $K$ that sends the vertical distribution to any lagrangian subbundle along $N^{*}C$ and transverse to it in $\Cot A$. 
\end{proof}

\begin{cor}
\label{cor: equivalence}Let $[L,C]\subset\Cot A$ be a lagrangian submicrofold. Then the following statements are equivalent:

(1) $[L,C]$ is clean,

(2') $[L,C]$ is transverse to a lagrangian subbundle $\Lambda^{W}\rightarrow C$ as in  \eqref{eq: W-distribution},

(3') $[L,C]$ is the image of $N^{*}C$ by a symplectomorphism germ
fixing the core as in \eqref{eq: sympl. rotation}.
\end{cor}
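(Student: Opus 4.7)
The plan is to reduce each of the three claimed implications directly to Theorem \ref{thm: equivalence theorem}, since the only difference is that (2') and (3') have ``there exists'' in place of the ``for all'' quantifiers in conditions (2) and (3). The implications $(1)\Rightarrow(2')$ and $(1)\Rightarrow(3')$ then come for free: cleanliness implies strong transversality, i.e.\ transversality to \emph{every} $\Lambda^{W}$, which is in particular transversality to one; similarly, cleanliness implies that $L$ is a deformation of $N^{*}C$ with respect to \emph{every} admissible $K$, hence with respect to some.

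The main step will be $(2')\Rightarrow(1)$, and my plan is simply to observe that the argument given in the theorem for $(2)\Rightarrow(1)$ already proves the stronger statement. Indeed, after the phrase ``Choose one,'' that proof fixes a single lagrangian subbundle $\Lambda^{W}$, takes symplectic orthogonals to obtain \eqref{eq:y}, deduces trivial intersection of $T_{(0,c)}L$ with $0\oplus W_{c}$, and concludes cleanliness via the pseudo-distributivity identity \eqref{eq: pseudo distributivity}. No further $\Lambda^{W}$ is invoked anywhere, so the same calculation applies verbatim under the weaker hypothesis (2').

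It then remains to verify $(3')\Rightarrow(2')$, which I would do directly. Assume $L=K(\im\,\beta)$ for some symplectomorphism germ $K$ of the type prescribed in Definition \ref{def: conormal deformation} and some closed one-form $\beta$ vanishing on $C$. Then $K^{-1}(L)=\im\,\beta\subset\Cot N^{*}C$ is projectable onto the zero section, hence transverse along $C$ to the vertical subbundle $V(\Cot N^{*}C)|_{C}$. Pushing this transversality forward by $TK$ shows that $[L,C]$ is transverse to the lagrangian subbundle $\Lambda^{W}:=TK\bigl(V(\Cot N^{*}C)|_{C}\bigr)$, which by the defining condition on $K$ is of the form \eqref{eq: W-distribution}. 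This establishes (2') and closes the cycle. I do not anticipate a genuine obstacle: the corollary is essentially the observation that the theorem's proof works ``one $\Lambda^{W}$ (or one $K$) at a time,'' and the only point that needs to be inspected is that the argument for $(2)\Rightarrow(1)$ in the theorem really uses only a single $\Lambda^{W}$, which it plainly does.
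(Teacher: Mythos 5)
Your proposal is correct and follows essentially the same route as the paper: the paper likewise observes that the $(2)\Rightarrow(1)$ argument in Theorem \ref{thm: equivalence theorem} only ever uses a single $\Lambda^{W}$ (so it already proves $(2')\Rightarrow(1)$), and closes the remaining cycle through $(3')\Rightarrow(2')$ exactly as you do, by pushing the transversality of the projectable submanifold $K^{-1}(L)=\im\,\beta$ to the vertical distribution forward by $TK$. Your write-up merely makes explicit the detail of $(3')\Rightarrow(2')$ that the paper leaves as an assertion.
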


\begin{proof}
A closer look at the $(2)\Rightarrow(1)$ part in the proof of Theorem \ref{thm: equivalence theorem} shows that we actually use the weaker version $(2')$ of $(2)$ to prove $(1)$. So we have a cyclic sequence of implications $(2)\Rightarrow(2')\Rightarrow(1)\Rightarrow(2)$ showing that $[L,C]$ is clean iff it is transverse to a lagrangian distribution $\Lambda^{W}$. Similarly, we have that $(3')\Rightarrow(2')\Rightarrow(2)\Rightarrow(3)\Rightarrow(3')$. 
\end{proof}

\begin{rem}
By the relative Poincar\'e Lemma, the one-form germ $[\beta]$ in Definition \ref{def: conormal deformation} is exact, that is, there is a function germ $[S]:N^{*}C\rightarrow\R$ such that $\beta=dS$. From now on, we will remove the ambiguity in the choice of $[S]$ by requiring that it vanishes on $C$. We will call $[S]$ the \textbf{generating function} of the clean lagrangian submicrofold $[L,C]$ \textit{associated with the symplectomorphism germ} $K$. 
\end{rem}

\subsection{Examples}

\subsubsection{Morse-Bott germs}

Consider a smooth function $f:A\rightarrow\R$ . The image $\im\, df$ of its differential $df:A\rightarrow\Cot A$ is a lagrangian submanifold $\Cot A$. The \textbf{critical set }of $f$ is the set
\[
	C_{f}:=(df)^{-1}(Z),
\]
where $Z$ is the zero section in $\Cot A$. We would like to characterize the class of smooth functions on $A$ for which $[\im\, df,C_{f}]$ is a clean lagrangian submicrofold. For this to make sense, we need $C_{f}$ to be a submanifold\footnote{We allow the connected components of $C_f$ to be submanifolds of possibly different dimensions.} of $A$, in which case we call it the \textbf{critical submanifold} of $f$. The cleanliness of $[\im\, df,C_{f}]$ is related to the following notion:

\begin{defn}
We say that the critical submanifold $C_{f}$ is \textbf{nondegenerate} if $\ker\mathcal{H}_{x}f=T_{x}C$ for all $x\in C$, where $\mathcal{H}_{x}f$ is the Hessian of $f$ at $x$ (which we see as a linear map from $T_{x}A$ to $\Cot_{x}A$). A function $f:A\rightarrow\R$ whose critical submanifold is nondegenerate is called a \textbf{Morse-Bott function}.  (If the components of $C_f$ are isolated points, $f$ is a Morse function.)
\end{defn}

\begin{prop}
Let $f:A\rightarrow\R$ be a smooth function whose critical set $C_{f}$ is a submanifold of $A$. Then the lagrangian submicrofold $[\im\, df,C_{f}]$ is clean if and only if $C_{f}$ is nondegenerate. 
\end{prop}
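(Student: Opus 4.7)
My plan is to unpack the cleanliness condition at a critical point and identify the tangent space to $\im\, df$ in terms of the Hessian.

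First I would fix $x \in C_f$ and note that $df(x) = 0 \in Z_A$, so $\im\, df$ indeed meets $Z_A$ precisely in (the image of) $C_f$; thus cleanliness reduces to the tangential identity
\[
	T_{(0,x)}(\im\, df)\cap T_{(0,x)}Z_A = T_{(0,x)}C_f
\]
for every $x\in C_f$, using the identification $T_{(0,x)}\Cot A = \Cot_x A\oplus T_x A$ from the excerpt, under which $T_{(0,x)}Z_A = \{0\}\oplus T_x A$.

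The key computation is the tangent map $T_x(df):T_x A\to \Cot_x A\oplus T_x A$ at a critical point. Since $df$ vanishes at $x$, its vertical component at $x$ is well-defined intrinsically as the Hessian $\mathcal{H}_x f : T_x A\to \Cot_x A$, while its horizontal component is the identity. Hence
\[
	T_{(0,x)}(\im\, df) = \bigl\{(\mathcal{H}_x f(v),\,v):v\in T_x A\bigr\}.
\]
Intersecting with $\{0\}\oplus T_x A$, the condition $\mathcal{H}_x f(v)=0$ singles out the kernel of the Hessian, giving
\[
	T_{(0,x)}(\im\, df)\cap T_{(0,x)}Z_A = \{0\}\oplus\ker\mathcal{H}_x f.
\]

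Finally, I would observe that one inclusion is free: differentiating the identity $df|_{C_f}=0$ along $C_f$ yields $\mathcal{H}_x f(v)=0$ for every $v\in T_x C_f$, so $T_x C_f\subseteq\ker\mathcal{H}_x f$ always. Thus cleanliness, which demands equality of $\{0\}\oplus\ker\mathcal{H}_x f$ with $\{0\}\oplus T_x C_f$, is equivalent to the reverse inclusion $\ker\mathcal{H}_x f\subseteq T_x C_f$, which (together with the automatic inclusion) is exactly the nondegeneracy of $C_f$. Both directions of the proposition then follow at once, and the only delicate point — a very mild one — is justifying that at a critical point the vertical part of $T_x(df)$ really is the intrinsic Hessian, which I would verify by a short coordinate computation.
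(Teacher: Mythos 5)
Your proof is correct and follows essentially the same route as the paper: compute $T_{(0,x)}(\im\, df)$ at a critical point as the graph of the Hessian, intersect with $T_{(0,x)}Z_A$ to get $\{0\}\oplus\ker\mathcal{H}_x f$, and match this against $T_x C_f$. Your explicit observation that $T_x C_f\subseteq\ker\mathcal{H}_x f$ holds automatically is a nice touch the paper leaves implicit, but it does not change the argument.
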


\begin{proof}
Consider the tangent map $Tdf:TA\rightarrow T(\Cot A)$ and let $x\in C_{f}$. On the one hand, we have that
\[
	\im\, T_{x}df=\big\{\big((\mathcal{H}_{x}f)v,v\big):\, v\in T_{x}A\big\},
\]
and, on the other hand, we know that $T_{(0,x)}\im\, df=\im\, T_{x}df$. So the intersection of $T_{(0,x)}\im\, df$ with $0\oplus T_{x}A$ consists of the vectors $0\oplus v$ such that $(\mathcal{H}_{x}f)v=0$, which, thanks to our assumption on $\ker\mathcal{H}_{x}$, is exactly the subspace $0\oplus T_{x}C_{f}$. 
\end{proof}

\begin{example}
Consider a function $f:\R^{n}\rightarrow\R$. We see its differential as the map $df:x\mapsto(\nabla f(x),x)$, where $\nabla f$ is the gradient of $f$. The critical set $C_{f}$ are the points in $\R^{n}$ where the gradient vanishes. The Hessian at $x\in\R^{n}$ is the Jacobian matrix $\mathcal{H}_{x}f=\left(\frac{\partial^{2}f}{\partial x_{i}\partial x_{j}}(x)\right)$ regarded as a linear map from $\R^{n}$ to $\R^{n}$. In case $C_{f}$ is a submanifold, it is nondegenerate when the Jacobian matrix vanishes only on vectors tangent to $C_{f}$. In particular, if $df$ is transverse to the zero section in $\Cot\R^{n}$, $C_{f}$ is a discrete collection of points, and the nondegeneracy condition for $C_{f}$ corresponds to the nondegeneracy of the Jacobian matrix at the critical points. 
\end{example}

\begin{example}
Among the polynomial functions $p_{n}(x)=x^{n}$ on the real line, the only one that yields a clean lagrangian submicrofold $[\im\, dp_{n},0]$ is the quadratic one. 
\end{example}

\begin{defn}
We say that the smooth function germ $[f]:[A,C]\rightarrow[\R,0]$ is a \textbf{Morse-Bott} \textbf{germ} with critical submanifold $C$ if there is a representative $f\in[f]$ having $C$ has its nondegenerate critical submanifold. 
\end{defn}

In other words, the lagrangian submicrofold $[\im\, df,C]$ is clean if and only if $[f]:[A,C]\rightarrow[\R,0]$ is a Morse-Bott germ.

\subsubsection{Transverse lagrangian submicrofolds}\label{sec: trans. lag. subs}

Suppose that the lagrangian submicrofold $[L,C]$ of $\Cot A$ intersects the zero section transversally in $C$, that is,
\[
	T_{(0,c)}L+0\oplus T_{c}A=\Cot_{c}A\oplus T_{c}A,
\]
for all $c\in C$. Of course, this implies that $[L,C]$ is clean, but now we also have that $C$ must be discrete, since two transverse lagrangian submanifolds can intersect only in isolated points. For instance, if $C$ is reduced to a single point $x$, $[L,x]$ is clean iff it is transverse to $T_{(0,x)}Z_{A}$. Therefore the clean lagrangian submicrofolds whose core is a single point correspond precisely to the symplectic micromorphisms from $[\Cot A,A]$ to the cotangent bundle of the one point manifold. Theorem \ref{thm: equivalence theorem} tells us that $[L,x]$ is a deformation of the conormal microbundle of $x$, that is, $[\Cot_{x}A,x]$. In other words, $L$ is the image of $\Cot_{x}A$ by a symplectomorphism germ around $x$ fixing this point and the zero section.

\begin{example}
In the cotangent bundle  $\Cot\R=\R_{p}\oplus\R_{x}$ of the real line, consider the  clean lagrangian submicrofolds $[L,0]$ with the origin as core. The transversality of $[L,0]$ tells us that the projection $(p,x)\mapsto p$ maps a representative $L$ diffeomorphically onto a neighborhood of $0$ in the fiber $\Cot_{0}\R$. Now suppose further that our clean lagrangian submicrofold is the image of $df$ for a Morse-Bott germ $[f]:[\R,0]\rightarrow[\R,0]$. Then the projection $(p,x)\mapsto x$  maps $\im\, df$ diffeomorphically onto a neighborhood of $0$ in $\R$. So the class of Morse-Bott germs $[f]:[\R,0]\rightarrow[\R,0]$ with $df(0)=0$ corresponds to the lagrangian submanifold germs through the origin that are projectable simultaneously on both the $p$-fiber and $x$-fiber.
\end{example}

\section{Local form \label{sub:Local-theory}}

In this section, we show that the lagrangian submicrofold underlying a symplectic micromorphism is clean. As a consequence of the equivalence theorem for clean lagrangian submicrofolds, we find that a symplectic micromorphism is always a deformation of the cotangent lift of its core. This allows us to associate a global generating function (depending on the choice of some symplectomorphism germ) with any symplectic micromorphism. This determines its local form in terms of local generating functions in admissible local charts (Theorem \ref{thm: local form}). Finally, we prove a theorem (Theorem \ref{thm: normal form}) that gives a decomposition of the underlying lagrangian submicrofold of a symplectic micromorphism as a fibration over its core, the fibers of which are actual graphs of smooth map germs.

\subsection{Global generating functions\label{sub:Symplectic-micromorphisms}}

Following \cite{BW1997}, we call \textbf{Schwartz transform} the symplectomorphism
\begin{eqnarray*}
   \mathcal{S}:\overline{\Cot A}\times\Cot B   & \longrightarrow & \Cot(A\times B),\\
   \big((p_{1},x_{1}),\,(p_{2},x_{2})\big)            & \mapsto            & (-p_{1},p_{2},x_{1},x_{2}).
\end{eqnarray*}
The Schwartz transform gives a one-to-one correspondence between the canonical relations from $\Cot A$ to $\Cot B$ and the lagrangian submanifolds of $\Cot(A\times B)$. This remains true in the microworld:

\begin{thm}
\label{pro: sympl. micromorph. as trans. lags.}The Schwartz transform induces a one-to-one correspondence between the symplectic micromorphisms from $\Cot A$ to $\Cot B$ with core $\phi:B\rightarrow A$ and the clean lagrangian submicrofolds in $\Cot(A\times B)$ with core $\graph\phi$.
\end{thm}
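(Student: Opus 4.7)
The approach is to reduce the theorem to Corollary \ref{cor: equivalence}. The Schwartz transform $\mathcal S$ is a symplectomorphism that is the identity on the zero sections, so it automatically induces a bijection between lagrangian submicrofolds $[V,\graph\phi]$ of $\overline{\Cot A}\times\Cot B$ with core $\graph\phi\subset A\times B$ and lagrangian submicrofolds $[\mathcal S(V),\graph\phi]$ of $\Cot(A\times B)$ with core $\graph\phi\subset Z_{A\times B}$. The substance of the theorem is therefore to match the two extra conditions: transversality of $V$ to the canonical lagrangian distribution $\Lambda$ of the symplectic micromorphism definition on the one side, and cleanliness of $\mathcal S(V)$ along $\graph\phi$ on the other.

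To do this I would compute the image under $T\mathcal S$ of $\Lambda=(TZ_A\oplus 0)\times(0\oplus V(\Cot B))$ along $\graph\phi$. Using the canonical splittings $T(\Cot A)|_{Z_A}\cong\Cot A\oplus TA$ and similarly on the other factor, and the fact that $\mathcal S$ is linear in the cotangent fibers and is the identity on bases, this is a small linear-algebra calculation. The outcome is
\[
T\mathcal S(\Lambda)=(0\oplus\Cot B)\oplus(TA\oplus 0)
\]
along $\graph\phi$, viewed inside $T(\Cot(A\times B))|_{\graph\phi}$. I would then recognize this as a distribution $\Lambda^W$ of the form in Definition \ref{def: W-distribution}, with the choice $W:=(TA\oplus 0)|_{\graph\phi}$. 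One checks easily that $W$ is complementary to $T\graph\phi$ in $T(A\times B)|_{\graph\phi}$, since $T_{(\phi(x),x)}\graph\phi=\{(T_x\phi\cdot v,v):v\in T_xB\}$; its annihilator is $W^0=0\oplus\Cot B$, and so $\Lambda^W=W^0\oplus W$ coincides with $T\mathcal S(\Lambda)$.

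Since $\mathcal S$ is a diffeomorphism, transversality of $V$ to $\Lambda$ along $\graph\phi$ is equivalent to transversality of $\mathcal S(V)$ to $\Lambda^W$ along $\graph\phi$. By Corollary \ref{cor: equivalence} ($(2')\Leftrightarrow(1)$), the latter is equivalent to $[\mathcal S(V),\graph\phi]$ being clean. This yields the desired bijection in both directions. The main (and essentially only) technical step is the identification $T\mathcal S(\Lambda)=\Lambda^W$ for the canonical choice of $W$; once this is in hand, the equivalence theorem does all the real work, so no serious obstacle is expected.
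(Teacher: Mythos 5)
Your proposal is correct and follows essentially the same route as the paper: both reduce the statement to Corollary \ref{cor: equivalence} by exhibiting the canonical distribution $\Lambda$ (transported by the Schwartz transform) as a distribution $\Lambda^{W}$ with $W=\phi^{*}(TA)$, i.e.\ $(TA\oplus 0)|_{\graph\phi}$, whose annihilator is $0\oplus\Cot B$. Your explicit computation of $T\mathcal{S}(\Lambda)$ just spells out the verification the paper leaves implicit.
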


\begin{proof}
Thanks to Corollary \ref{cor: equivalence}, we only need to show that the distribution $\Lambda$ in \eqref{eq:lagr-distrib} is of the form $\Lambda^{W}$ as in Definition \ref{def: W-distribution} for some $W$. To begin, we observe that the distribution $W:=\phi^{*}(TA)$ along $\graph\phi$ is transverse to $T\graph\phi$ in $T(A\times B)$. Its annihilator is $W^{0}=\{0\}\oplus V(\Cot B)$, and we verify that $\Lambda^{W}$ coincides with $\Lambda$ when restricted to $\graph \phi$. 
\end{proof}

The proposition above allows us to apply  Theorem \ref{thm: equivalence theorem} to symplectic micromorphisms $([V],\phi):\Cot A\rightarrow\Cot B$. The \textbf{cotangent lift} $\Cot\phi:\Cot A\rightarrow\Cot B$ of the core map plays the role of the conormal microbundle since, by definition, 
\[
	\Cot\phi:=\mathcal{S}^{-1}\big(N^{*}\graph\phi\big).
\]
The conormal bundle $N^{*}\graph\phi\subset\Cot(A\times B)$ is the image of the lagrangian embedding $\iota_{\phi}$ of $\phi^{*}(\Cot A)$ into $\Cot(A\times B)$ given by
\begin{eqnarray}\label{eq:cotangent-lift-inclusion-1}
	\iota_{\phi}(p_{1},x_{2}) & = & \Big(\big(p_{1},-(\Cot_{x_{2}}\phi)p_{1}\big),\big(\phi(x_{2}),x_{2}\big)\Big).
\end{eqnarray}
Thus the cotangent lift can be described by the representative 
\[
	\Cot\phi:=\Big\{\Big(\big(p_{1},\,\phi(x_{2})\big),\,\big((\Cot_{x_{2}}\phi)p_{1},\, x_{2}\big)\Big):\,(p_{1},x_{2})\in\phi^{*}(\Cot A)\Big\}.
\]
This allows us to identify the cotangent lift $\Cot\phi$ and the conormal bundle $N^{*}\graph\phi$ with the pullback bundle $\phi^{*}(\Cot A)$. 

We can now apply the equivalence theorem to the case of symplectic micromorphisms. Namely, Theorem \ref{thm: equivalence theorem} tells us that the data of a symplectomorphism germ
\begin{equation}
	K_{\phi}:\Cot(\phi^{*}(\Cot A))\longrightarrow\Big[\overline{\Cot A}\times\Cot B,\,\Cot\phi\Big],\label{eq: K-phi}
\end{equation}
whose restriction to the zero section coincides with $\iota_{\phi}$ and which satisfies the condition in Definition \ref{def: conormal deformation}, allows us to describe a symplectic micromorphism $([V],\phi)$ from $\Cot A$ to $\Cot B$ in the two following ways: 

\begin{itemize}
\item $[V]=R_{\phi}^{V}(\Cot\phi)$ for a symplectomorphism germ
\[
	R_{\phi}^{V}:\Big[\overline{\Cot A}\times\Cot B,\,\graph\phi\Big]\rightarrow\Big[\overline{\Cot A}\times\Cot B,\,\graph\phi\Big],
\]
which is uniquely determined by $K_{\phi}$ and $([V],\phi)$. In other words, each symplectic micromorphism is a deformation in this sense of the cotangent lift of its core.

\item $[V]=K_{\phi}^{-1}(\im[df])$ for a smooth function germ
\[
	[f]:[\phi^{*}(\Cot A),Z_{B}]\rightarrow[\R,0]
\]
whose critical submanifold is the zero section $Z_{B}$ of $\phi^{*}(\Cot A)$. The germ $[f]$ is uniquely determined by $K_{\phi}$ and $([V],\phi)$, and it is called the \textbf{global generating function} of $([V],\phi)$ associated with $K_{\phi}$. 
\end{itemize}

As before, the two descriptions are related: namely,
\[
	R_{\phi}^{V}=K_{\phi}\circ T_{df}\circ K_{\phi}^{-1},
\]
where $T_{df}$ is again the symplectomorphism germ obtained by fiber translation with the one form germ $df$.

\begin{example} \label{exa: local gen fct 1}
Let $\phi$ be a smooth map from an open subset $U_{2}\subset\R^{l}$ to an open subset $U_{1}\subset\R^{k}$. In this case, we have a canonical symplectomorphism germ
\begin{eqnarray*}
	\Cot(\phi^{*}(\Cot U_{1}))                  & \overset{K_{\phi}}{\rightarrow} & \Big[\overline{\Cot U_{1}}\times\Cot U_{2},\,\Cot\phi\Big]\\
	\big((v_{1},p_{2}),(p_{1},x_{2})\big)  & \mapsto                                    & \big(p_{1},\phi(x_{2})+v_{1},\,(\Cot_{x_{2}}\phi)p_{1}+p_{2},x_{2}\big)
\end{eqnarray*}
since we can identify $\Cot(\phi^{*}(\Cot U_{1}))$ with $\R_{v_{1}}^{k}\times\R_{p_{2}}^{l}\times\R_{p_{1}}^{k}\times U_{2}$. Therefore, a symplectic micromorphism $([V],\phi)$ from $\Cot U_{1}$ to $\Cot U_{2}$ is completely determined by the germ of a function
\begin{equation}
	[f]:[\R_{p_{1}}^{k}\times U_{2},\{0\}\times U_{2}]\rightarrow[\R,0]\quad\textrm{s.t.}\quad\partial_{p}f(0,x_{2})=0.\label{eq: local gen. fcts}
\end{equation}
In very explicit terms, a representative $V$of the symplectic micromorphism can be described as the set of points in $\overline{\Cot U_{1}}\times\Cot U_{2}$ of the form
\begin{equation}
	\Big(p_{1},\phi(x_{2})+\partial_{p}f(p_{1},x_{2}),\,(\Cot_{x_{2}}\phi)p_{1}+\partial_{x}f(p_{1},x_{2}),x_{2}\Big),\label{eq: local form}
\end{equation}
where $(p_{1},x_{2})$ runs in a suitable neighborhood of the zero section in $\phi^{*}(\Cot U_{1})$. Here, we see that the symplectomorphism germ
\[
	R_{V}^{\phi}:\Big[\overline{\Cot U_{1}}\times\Cot U_{2},\,\graph\phi\Big]\rightarrow\Big[\overline{\Cot U_{1}}\times\Cot U_{2},\,\graph\phi\Big]
\]
given by the formula
\begin{equation}
	R_{\phi}^{V}(p_{1},x_{1},p_{2},x_{2})=\big(p_{1,}x_{1}+\partial_{p}f(p_{1},x_{2}),\, p_{2}+\partial_{x}f(p_{1},x_{2}),x_{2}\big)\label{eq: R local}
\end{equation}
maps $\Cot\phi$ diffeomorphically onto $([V],\phi)$ as prescribed by the equivalence theorem. 
\end{example}

\subsection{Local generating functions}

Example \ref{exa: local gen fct 1} sets us on the way toward a notion of local generating function for symplectic micromorphisms.

\begin{defn}
An \textbf{admissible local chart} for a symplectic micromorphism $([V],\phi):\Cot A\rightarrow\Cot B$ is a local chart of $\Cot A\times\Cot B$ with domain of the form $\Cot U$, where both factors of $U:=U_{1}\times U_{2}$ are the domains of coordinate patches $\chi_{1}:U_{1}\rightarrow A$ and $\chi_{2}:U_{2}\rightarrow B$ such that $\phi(\chi_{2}(U_{2}))\subset\chi_{1}(U_{1})$. 

We define the \textbf{restriction} of $([V],\phi)$ to $\Cot U$ to be the symplectic micromorphism $([V_{U}],\phi_{U})$ from $\Cot U_{1}$ to $\Cot U_{2}$ obtained as the image
\[
	([V_{U}],\phi_{U}):=(\Cot\chi_{1}\times\Cot\chi_{2})\Big(\Big[V\cap\Cot(A\times B)_{|\chi(U)},\graph\phi\cap\chi(U)\Big]\Big),
\]
where $\chi:=\chi_{1}\times\chi_{2}$. 
\end{defn}

Since the restriction of a symplectic micromorphism to an admissible local chart is a symplectic micromorphism and admits a generating function as in Example \ref{exa: local gen fct 1}, we immediately obtain the following ``local form'' theorem:

\begin{thm}
\label{thm: local form}(Local form). Let $([V],\phi):\Cot A\rightarrow\Cot B$ be symplectic micromorphism, and let $\Cot U$ be an admissible local
chart. Then there is a representative $V_{U}\in[V_{U}]$ of the restriction to the local chart such that 
\begin{eqnarray}
	V_{U} & = & \Big\{\Big(p_{1},\partial_{p}F(p_{1},x_{2}),\partial_{x}F(p_{1},x_{2}),x_{2}\Big):(p_{1},x_{2})\subset W\Big\},\label{eq: F local form}
\end{eqnarray}
where $W$ is a suitable neighborhood of the zero section in $\phi^{*}(\Cot U_{1})$. The \textbf{local generating function} $F$ is of the form
\begin{eqnarray}
	F(p_{1},x_{2}) & := & \langle p_{1},\phi(x_{2})\rangle+f(p_{1},x_{2}),\label{eq:Ff}
\end{eqnarray}
where $f$ is a representative of the function germ as in \eqref{eq: local gen. fcts}.
\end{thm}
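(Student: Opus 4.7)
The plan is to derive the theorem as an essentially immediate consequence of Example \ref{exa: local gen fct 1}, with the only substantive observation being that restriction to an admissible local chart preserves the class of symplectic micromorphisms.

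First I would verify that $([V_U],\phi_U)$ is indeed a symplectic micromorphism from $\Cot U_1$ to $\Cot U_2$, with core $\phi_U := \chi_1^{-1}\circ \phi\circ \chi_2$. Since $\chi_1$ and $\chi_2$ are diffeomorphisms onto open subsets, their cotangent lifts $\Cot\chi_1$ and $\Cot\chi_2$ are symplectomorphisms between open subsets of $\Cot U_i$ and the corresponding open subsets of $\Cot A$ and $\Cot B$. The condition $\phi(\chi_2(U_2))\subset\chi_1(U_1)$ ensures that $\graph\phi\cap\chi(U)$ is carried to $\graph\phi_U$ by $\Cot\chi_1\times\Cot\chi_2$, and since symplectomorphisms preserve the cleanness of intersections of lagrangians, the transported object is a clean lagrangian submicrofold of $\Cot(U_1\times U_2)$ with core $\graph\phi_U$. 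By Theorem \ref{pro: sympl. micromorph. as trans. lags.}, it is a symplectic micromorphism.

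Next I would apply Example \ref{exa: local gen fct 1} to this restricted micromorphism, viewed between the cotangent microbundles of the open subsets $U_1\subset\R^k$ and $U_2\subset\R^l$. This provides a function germ $f:[\R_{p_1}^{k}\times U_2,\{0\}\times U_2]\to[\R,0]$ satisfying $\partial_p f(0,x_2)=0$ such that a representative $V_U$ is the set of points
\[
   \Big(p_1,\,\phi_U(x_2)+\partial_p f(p_1,x_2),\,(\Cot_{x_2}\phi_U)p_1+\partial_x f(p_1,x_2),\,x_2\Big),
\]
with $(p_1,x_2)$ ranging in a suitable neighborhood $W$ of the zero section of $\phi_U^{*}(\Cot U_1)$.

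Finally, setting $F(p_1,x_2):=\langle p_1,\phi_U(x_2)\rangle+f(p_1,x_2)$, one checks by direct differentiation that $\partial_p F(p_1,x_2)=\phi_U(x_2)+\partial_p f(p_1,x_2)$ and $\partial_x F(p_1,x_2)=(\Cot_{x_2}\phi_U)p_1+\partial_x f(p_1,x_2)$; the second identity uses that the $x_2$-derivative of the pairing $\langle p_1,\phi_U(x_2)\rangle$ is exactly the cotangent map applied to $p_1$. Substituting yields the claimed form \eqref{eq: F local form}. The only step requiring any care is the first one, verifying that restriction via the cotangent lifts of the chart diffeomorphisms preserves the defining transversality of a symplectic micromorphism, and this is automatic from the fact that cotangent lifts of diffeomorphisms are symplectomorphisms and that cleanness is a symplectic invariant; the rest is a bookkeeping computation.
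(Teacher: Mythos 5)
Your proof is correct and follows essentially the same route as the paper, which likewise derives the theorem immediately from the observation that the restriction to an admissible chart is again a symplectic micromorphism together with the explicit generating-function description of Example \ref{exa: local gen fct 1}. The extra details you supply (invariance of cleanness under the cotangent lifts of the chart maps, and the direct differentiation of $\langle p_{1},\phi_{U}(x_{2})\rangle$) are exactly the bookkeeping the paper leaves implicit.
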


\begin{example}
Let $\Cot\phi:\Cot A\rightarrow\Cot B$ be the cotangent lift of a smooth map $\phi:B\rightarrow A$. In an admissible local chart $\Cot U$, the cotangent lift admits the local generating function
\[
	F_{U}(p_{1},x_{2}):=\langle p_{1},\phi_{U}(x_{2})\rangle.
\]
However, the global generating function $[f]:\phi^{*}(\Cot A)\rightarrow[\R,0]$ associated by the equivalence theorem with any symplectomorphism germ \eqref{eq: K-phi} is always zero for cotangent lifts. 
\end{example}

\subsection{Composition formula and monicity}

Let $M,N$ and $P$ be three sets and let $V\subset M\times N$ and $W\subset N\times P$ be two binary relations. We say that the composition $W\circ V\subset M\times P$ is \textbf{monic} if, for all $z=(m,p)\in W\circ V$, the set
\[
	\mathcal{M}_{z}=\big\{ n\in N:\,(m,n)\in V,\,(n,p)\in W\big\}
\]
is a singleton. In the microsymplectic world, the corresponding definition is the following:

\begin{defn}
Let $[M,A]$, $[N,B]$ and $[P,C]$ be three symplectic microfolds. The composition of the symplectic micromorphisms
\[
	[M,A]\overset{([V],\phi)}{\longrightarrow}[N,B]\overset{([W],\psi)}{\longrightarrow}[P,C]
\]
is \textbf{monic} if there are representatives $V\in[V]$ and $W\in[W]$ whose composition, as binary relations, is monic. 
\end{defn}

Our goal here is to show that the composition of symplectic micromorphisms is \textit{always} monic. By the lagrangian embedding theorem, it is enough to see this for symplectic micromorphisms between cotangent microbundles: 
\[
	\Cot A\overset{([V],\phi)}{\longrightarrow}\Cot B\overset{([W],\psi)}{\longrightarrow}\Cot C.
\]
First of all, for all $c\in C$ and for all representatives $V\in[V]$ and $W\in[W]$, we have that
\[
	z_{c}:=\big((0,(\phi\circ\psi)(c)),(0,c)\big)\in W\circ V,
\]
and $\mathcal{M}_{z_{c}}=\{\psi(c)\}$. We need to check that the composition remains monic in a neighborhood of $z_{c}$. To do this, we can go to local coordinates and consider admissible local charts
\begin{eqnarray*}
	\Cot U_{1}\times\Cot U_{2} & \textrm{of} & \Cot A\times\Cot B,\\
	\Cot U_{2}\times\Cot U_{3} & \textrm{of} & \Cot B\times\Cot C,
\end{eqnarray*}
such that $(\phi\circ\psi)(c)\in U_{1}$, $\psi(c)\in U_{2}$ and $c\in U_{3}$. By Theorem \ref{thm: local form}, we can express the restrictions of these symplectic micromorphisms in terms of local generating functions:
\begin{eqnarray*}
	V_{U}  & = & \Big\{\Big((p_{1},\partial_{p}F(p_{1},x_{2})),(\partial_{x}F(p_{1},x_{2}),x_{2})\Big):\, (p_{1},x_{2})\in N_V\Big\}\\
	W_{U} & = & \Big\{\Big((p_{2},\partial_{p}G(p_{2},x_{3})),(\partial_{x}G(p_{2},x_{3}),x_{3})\Big):\, (p_{2},x_{3})\in N_W\Big\},
\end{eqnarray*}
where $N_W$ is a neighborhood of $(0,x_3)$ in $\psi^*(\Cot U_2)$ and $N_V$ is a neighborhood of $(0,\psi(x_3))$ in $\phi^*(\Cot U_1)$. A point $z\in W_{U}\circ V_{U}$ is of the form
\[
	z=\Big((p_{1},\partial_{p}F(p_{1},\bar{x}_{2})),(\partial_{x}G(\bar{p}_{2},x_{3}),x_{3})\Big),
\]
for $(\bar{p}_{2},\bar{x}_{2})$ such that
\[
	(\partial_{x}F(p_{1},\bar{x}_{2}),\bar{x}_{2})=(\bar{p}_{2},\partial_{p}G(\bar{p}_{2},x_{3})).
\]
The following lemma shows that $\mathcal{M}_{z}$ is reduced to a single point.

\begin{lem}
For all $(p_{1},x_{3})$ with $p_{1}$ small enough, the following system
\begin{eqnarray*}
	\bar{p}_{2} & = & \partial_{x}F(p_{1},\bar{x}_{2}),\\
	\bar{x}_{2} & = & \partial_{p}G(\bar{p}_{2},x_{3}),
\end{eqnarray*}
has a unique solution $\big(\bar{p}_{2}(p_{1},x_{3}),\bar{x}_{2}(p_{1},x_{3})\big)$.
\end{lem}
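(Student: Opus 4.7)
The plan is a direct application of the implicit function theorem, treating $(p_1, x_3)$ as parameters and $(\bar p_2, \bar x_2)$ as unknowns. I would define
\[
\Phi(p_1, x_3; \bar p_2, \bar x_2) := \bigl(\bar p_2 - \partial_x F(p_1, \bar x_2),\; \bar x_2 - \partial_p G(\bar p_2, x_3)\bigr)
\]
and look for zeros of $\Phi$ in $(\bar p_2, \bar x_2)$ near a suitably chosen base point.

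To locate that base point, first set $p_1 = 0$. By Theorem \ref{thm: local form}, $F$ has the form $F(p_1, x_2) = \langle p_1, \phi(x_2)\rangle + f(p_1, x_2)$ with $f(0,\cdot)\equiv 0$ and $\partial_p f(0,\cdot)\equiv 0$; analogously $G(p_2, x_3) = \langle p_2, \psi(x_3)\rangle + g(p_2, x_3)$. Differentiating, one reads off $\partial_x F(0, x_2) = 0$ (the $x$-derivative of $f$ on the zero section vanishes because $f$ does) and $\partial_p G(0, x_3) = \psi(x_3)$. Plugging in at $p_1 = 0$ forces $(\bar p_2, \bar x_2) = (0, \psi(x_3))$; hence $(0, c;\, 0, \psi(c))$ is a solution of $\Phi = 0$ for any distinguished center $c \in U_3$.

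The key step is the Jacobian of $\Phi$ in its unknowns at this base point:
\[
J = \begin{pmatrix} I & -\partial_x^2 F(0, \psi(c)) \\ -\partial_p^2 G(0, c) & I \end{pmatrix}.
\]
The identity $f(0, \cdot) \equiv 0$ differentiates twice in $x_2$ to give $\partial_x^2 F(0, \cdot) = 0$, so the upper-right block vanishes. Thus $J$ is block lower triangular with identity diagonal blocks and $\det J = 1$. The implicit function theorem then produces smooth functions $\bar p_2(p_1, x_3), \bar x_2(p_1, x_3)$, uniquely defined and solving the system on a neighborhood of $(0, c)$, which is exactly the lemma's conclusion.

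No genuine obstacle arises here. The point worth flagging is that the invertibility of $J$ is forced by the vanishing condition $f(0, \cdot) = 0$ alone (which kills one off-diagonal block) and does not depend on any nondegeneracy of $g$: monicity is a universal feature of the microsymplectic category, not a generic condition on the composed micromorphisms.
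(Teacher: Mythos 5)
Your proof is correct and follows the same route as the paper: the paper likewise applies the implicit function theorem to the map $(p_1,p_2,x_2,x_3)\mapsto(\partial_x F(p_1,x_2)-p_2,\,\partial_p G(p_2,x_3)-x_2)$ at the base point $(0,0,\psi(x_3),x_3)$, using $\partial_x F(0,x_2)=0$ and $\partial_p G(0,x_3)=\psi(x_3)$. The only difference is that you explicitly compute the Jacobian in the unknowns and observe that the vanishing of $\partial_x^2 F(0,\cdot)$ makes it unipotent lower triangular; the paper leaves this as ``straightforward,'' so your write-up is a faithful (and slightly more complete) version of the same argument.
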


\begin{proof}
This follows from a straightforward application of the implicit function Theorem to the function
\[
K(p_{1,}p_{2},x_{2},x_{3})=\left(\begin{array}{c}
										\partial_{x}F(p_{1},x_{2})-p_{2}\\
										\partial_{p}G(p_{2},x_{3})-x_{2}
									\end{array}
								\right)
\]
around the point $(0,0,\psi(x_{3}),x_{3})$ since $\partial_{x}F(0,x_{2})=0$ and $\partial_{p}G(0,x_{3})=\psi(x_{3})$. 
\end{proof}

Putting everything together, we obtain the following proposition:

\begin{prop} \label{pro: simplicity}
The composition of symplectic micromorphisms is always monic.
\end{prop}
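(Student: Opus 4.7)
The plan is to package together the local analysis that has just been done via the lemma into a statement about germs. Essentially, the lemma does all the hard local work; what remains is to argue that local monicity around every core point yields monicity of suitable germ representatives of $V$ and $W$.

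First I would reduce to the cotangent case. By the lagrangian embedding theorem, any symplectic microfold is isomorphic (as symplectic microfolds) to a cotangent microbundle around its core, and symplectic micromorphisms transport through such isomorphisms. Hence it suffices to prove monicity for
\[
	\Cot A\overset{([V],\phi)}{\longrightarrow}\Cot B\overset{([W],\psi)}{\longrightarrow}\Cot C.
\]

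Next I would localize around the core. For each $c\in C$, one has the zero-section point $z_c=\bigl((0,\phi(\psi(c))),(0,c)\bigr)\in W\circ V$ with $\mathcal{M}_{z_c}=\{\psi(c)\}$ (the only lagrangian intersection with the zero section available near the core). Covering $C$ by domains $U_3$ of coordinate patches, and choosing matching admissible local charts $U_2$ around $\psi(U_3)$ and $U_1$ around $\phi(\psi(U_3))$, Theorem \ref{thm: local form} gives representatives $V_U$ and $W_U$ of the restrictions described by local generating functions $F(p_1,x_2)=\langle p_1,\phi(x_2)\rangle+f(p_1,x_2)$ and $G(p_2,x_3)=\langle p_2,\psi(x_3)\rangle+g(p_2,x_3)$, with $f,g$ vanishing to second order on the respective zero sections. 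Then a point of $W_U\circ V_U$ near $z_c$ is parametrized by $(p_1,x_3)$ together with a choice of intermediate $(\bar p_2,\bar x_2)$ solving the coupled equations displayed just before the lemma. The lemma, whose proof rests on the implicit function theorem applied at $(0,0,\psi(x_3),x_3)$ using $\partial_x F(0,x_2)=0$ and $\partial_p G(0,x_3)=\psi(x_3)$, gives a unique solution $(\bar p_2(p_1,x_3),\bar x_2(p_1,x_3))$ for all $(p_1,x_3)$ in a neighborhood of $(0,c)$. Hence $\mathcal{M}_z$ is a singleton for every $z\in W_U\circ V_U$ sufficiently close to $z_c$.

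Finally, I would assemble the germ statement. Shrinking $V$ and $W$ to representatives supported in a sufficiently small tubular neighborhood of $\graph\phi$ and $\graph\psi$ respectively (using, if needed, a countable locally finite cover of $C$ by admissible local charts and intersecting the resulting open neighborhoods with a common tubular neighborhood), we can ensure that every $z\in W\circ V$ lies in the image of such a local chart construction, and so has a singleton $\mathcal{M}_z$. Thus some representatives of $[V]$ and $[W]$ compose monically, which is exactly the definition of monic composition in the microsymplectic sense.

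The only real subtlety — indeed the main obstacle — is this last assembly step: guaranteeing that the singleton property, established locally in a neighborhood of each $z_c$, actually persists globally on a pair of germ representatives of $V$ and $W$. This is not automatic, because a single global representative of $W\circ V$ might a priori contain ``far'' intermediate points $n\in N$ that are not detected in any single admissible chart. The resolution is that the transversality defining symplectic micromorphisms forces the intermediate point corresponding to any $z$ near the core to be itself near $\psi(c)$, so shrinking $V$ and $W$ in tubular neighborhoods of $\graph\phi$ and $\graph\psi$ confines all candidate intermediates to $U_2$, where the lemma applies and gives uniqueness.
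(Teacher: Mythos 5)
Your proof follows essentially the same route as the paper's: reduce to the cotangent case via the lagrangian embedding theorem, note that $\mathcal{M}_{z_c}=\{\psi(c)\}$ on the core, pass to admissible local charts where Theorem \ref{thm: local form} provides local generating functions, and invoke the implicit-function-theorem lemma to get uniqueness of the intermediate point $(\bar p_2,\bar x_2)$. Your final paragraph merely spells out, with somewhat more care about shrinking representatives and confining candidate intermediates, what the paper compresses into the phrase ``putting everything together,'' so the argument is correct and matches the paper's.
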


As a byproduct, we also get a composition formula for local generating functions. We start by noticing that the unique point $(\bar{p}_{2},\bar{x}_{2})$ in $\mathcal{M}_{z}$ is also the unique critical point of the function
\[
	H_{p_{1},x_{3}}(p_{2},x_{2}):=F(p_{1},x_{2})+G(p_{2},x_{3})-p_{2}x_{2},
\]
where $p_{1}$ and $x_{3}$ are held fixed. If we denote by $(\mbox{G\ensuremath{\star}F})(p_{1},x_{3})$ the function $ $$H_{p_{1},x_{2}}$ evaluated at its critical point, we obtain the following result:

\begin{prop}
In the notation as above, the function $G\star F$ is the local generating function of the composition $W_{U}\circ V_{U}$.
\end{prop}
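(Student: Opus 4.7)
The plan is to apply the envelope theorem to $H_{p_1,x_3}$ at its critical point and then compare the resulting partial derivatives of $G\star F$ with the parametrization of $W_U\circ V_U$ dictated by Theorem~\ref{thm: local form}.

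First I would recall the setup. By the composition-of-relations formula, a point of $W_U\circ V_U$ has the form
\[
	\Big((p_1,\partial_p F(p_1,\bar{x}_2)),\,(\partial_x G(\bar{p}_2,x_3),x_3)\Big),
\]
where the matching conditions $\bar{p}_2=\partial_x F(p_1,\bar{x}_2)$ and $\bar{x}_2=\partial_p G(\bar{p}_2,x_3)$ are \emph{exactly} the critical-point equations $\partial_{p_2} H_{p_1,x_3}=\partial_{x_2} H_{p_1,x_3}=0$ for $H_{p_1,x_3}(p_2,x_2)=F(p_1,x_2)+G(p_2,x_3)-p_2 x_2$. By the preceding lemma, these equations define smooth functions $\bar{p}_2(p_1,x_3)$ and $\bar{x}_2(p_1,x_3)$ in a neighborhood of $(0,x_3)$, so $W_U\circ V_U$ is parametrized smoothly by $(p_1,x_3)$ in a suitable neighborhood of the zero section of $(\phi\circ\psi)^*(\Cot U_1)$.

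Next I would apply the envelope theorem. Differentiating
\[
	(G\star F)(p_1,x_3)=F(p_1,\bar{x}_2)+G(\bar{p}_2,x_3)-\bar{p}_2\bar{x}_2
\]
with respect to $p_1$ by the chain rule yields
\[
	\partial_{p_1}(G\star F)=\partial_{p_1}F(p_1,\bar{x}_2)+\big(\partial_{x_2}F(p_1,\bar{x}_2)-\bar{p}_2\big)\partial_{p_1}\bar{x}_2+\big(\partial_{p_2}G(\bar{p}_2,x_3)-\bar{x}_2\big)\partial_{p_1}\bar{p}_2,
\]
and the two parenthesized factors vanish by the critical-point equations, giving $\partial_{p_1}(G\star F)=\partial_{p_1}F(p_1,\bar{x}_2)$. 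The analogous computation for $\partial_{x_3}$ yields $\partial_{x_3}(G\star F)=\partial_{x_3}G(\bar{p}_2,x_3)$. These are precisely the two coordinates appearing in the description of a point of $W_U\circ V_U$ above, so $W_U\circ V_U$ takes the form prescribed by Theorem~\ref{thm: local form} with generating function $G\star F$.

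Finally, I would verify that $G\star F$ has the structural form \eqref{eq:Ff} with core map $\phi\circ\psi$. At $p_1=0$, the critical-point system together with the normalization $\partial_x F(0,x_2)=0$ and $\partial_p G(0,x_3)=\psi(x_3)$ forces $\bar{p}_2=0$ and $\bar{x}_2=\psi(x_3)$, so $(G\star F)(0,x_3)=F(0,\psi(x_3))+G(0,x_3)=0$ (since both generating functions vanish on the zero section). The envelope formula then gives $\partial_{p_1}(G\star F)(0,x_3)=\partial_{p_1}F(0,\psi(x_3))=(\phi\circ\psi)(x_3)$, so $G\star F$ admits a splitting $(G\star F)(p_1,x_3)=\langle p_1,(\phi\circ\psi)(x_3)\rangle+h(p_1,x_3)$ with $\partial_p h(0,x_3)=0$, as required. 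The only delicate point in the whole argument is the envelope calculation, but it reduces to the observation that the critical-point equations for $H_{p_1,x_3}$ coincide identically with the matching conditions defining the composition of relations; everything else is bookkeeping.
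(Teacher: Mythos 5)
Your proof is correct and follows essentially the same route as the paper, whose entire argument is the envelope-theorem identities $\partial_{p}(G\star F)=\partial_{p}F(p_{1},\bar{x}_{2})$ and $\partial_{x}(G\star F)=\partial_{x}G(\bar{p}_{2},x_{3})$ that you derive explicitly. Your additional check that $G\star F$ has the form \eqref{eq:Ff} with core map $\phi\circ\psi$ is a welcome detail the paper leaves implicit.
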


\begin{proof}
We see this by noticing that
\begin{eqnarray*}
	\partial_{p}(G\star F)(p_{1},x_{3}) & = & \partial_{p}F(p_{1},\bar{x}_{2}),\\
	\partial_{x}(G\star F)(p_{1},x_{3}) & = & \partial_{x}G(\bar{p}_{2},x_{3}),
\end{eqnarray*}
where $(\bar{p}_{2},\bar{x}_{2})$ is the unique critical point of $H_{p_{1},x_{3}}$. 
\end{proof}

\subsection{Decomposition in terms of graphs of maps}

A cotangent lift $\Cot\phi:\Cot A\rightarrow\Cot B$ has the nice property that, even though it is not itself the graph of a map when $\phi$ is not a diffeomorphism, the intersection of its underlying lagrangian submicrofold with the coisotropic submanifold $\Cot A\times\Cot_{x_{2}}B$ is the graph of a map: 
\begin{eqnarray*}
	\graph\Cot_{x_{2}}\phi & = & \Cot\phi\cap(\Cot A\times\Cot_{x_{2}}B),\\
	\Cot\phi                        & = & \bigcup_{x_{2}\in B}\graph\Cot_{x_{2}}\phi.
\end{eqnarray*}
To make sense of this map-like property for general symplectic micromorphisms, we need the following definitions:

\begin{defn}
A \textbf{lagrangian fibration} $\mathcal{F}A$ of a symplectic microfold $[M,A]$ is a collection, smoothly parametrized by $A$, of transverse lagrangian submicrofolds (in the sense of Paragraph \ref{sec: trans. lag. subs})
\[
	[\mathcal{F}_{x}A,\{x\}]\subset[M,A],\quad x\in A.
\]
A lagrangian fibration of $[M,A]$ \textbf{along a smooth map} $\phi:B\rightarrow A$ is a collection $\mathcal{F}A$ smoothly indexed by $B$ of transverse lagrangian submicrofolds
\[
	[\mathcal{F}_{y}A,\{\phi(y)\}]\subset[M,A],\quad y\in B.
\]
The \textit{vertical} lagrangian fibration in a cotangent microbundle is given by the germ of its fibers at $0$. 
\end{defn}

\begin{example}
Consider the symplectic micromorphism $T=([\graph\Psi],\Psi_{|A}^{-1})$ from $[M,A]$ to $[N,B]$ coming from a symplectomorphism germ $\Psi:[M,A]\rightarrow[N,B]$ between two symplectic microfolds. For any lagrangian fibration $\mathcal{F}B$ of $[N,B]$, we obtain a corresponding lagrangian fibration $\mathcal{F}A$ of $[M,A]$ along $\phi:=\Psi_{|A}^{-1}$ by setting
\[
	\mathcal{F}_{y}A:=\Psi^{-1}(\mathcal{F}_{y}B).
\]
If we denote by $\Psi_{y}$ the restriction of $\Psi$ to $\mathcal{F}_{y}A$, we obtain the identities 
\begin{eqnarray*}
	\graph\Psi_{y} & = & \graph\Psi\cap(M\times\mathcal{F}_{y}B),\\
	\graph\Psi       & = & \bigcup_{y\in B}\graph\Psi_{y},
\end{eqnarray*}
which are similar to the ones we had in the case of cotangent lifts, except that here the lagrangian fibration $\mathcal{F}B$, (which now plays the role of the vertical distribution for cotangent microbundles) is not canonical. 
\end{example}

\begin{thm}
\label{thm: normal form} Let $([V],\phi):[M,A]\rightarrow[N,B]$ be a symplectic micromorphism between two symplectic microfolds. Then, for any lagrangian fibration $\mathcal{F}B$ of $[N,B]$, there exists a unique corresponding lagrangian fibration $\mathcal{F}A$ along $\phi$ such that
\begin{eqnarray*}
   \graph\Psi_{y} & = & V\cap(P\times\mathcal{F}_{y}B),\\
   V                     & = & \bigcup_{y\in B}\graph\Psi_{y},
\end{eqnarray*}
for suitable representatives, and where $[\Psi_{y}]:[\mathcal{F}_{y}A,\{\phi(y)\}]\rightarrow[\mathcal{F}_{y}B,\{y\}]$ is a collection of smooth map germs indexed by $B$. 
\end{thm}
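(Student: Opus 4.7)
The plan is to let the given lagrangian fibration $\mathcal{F}B$ play the role of the auxiliary lagrangian splitting demanded by the defining transversality condition of a symplectic micromorphism, and then to read off each $\mathcal{F}_{y}A$ as the image in $M$ of the slice $V\cap(M\times\mathcal{F}_{y}B)$.

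\textbf{Step 1. Turning $\mathcal{F}B$ into a splitting.} Because each $[\mathcal{F}_{y}B,\{y\}]$ is a transverse lagrangian submicrofold, the family $y\mapsto K_{y}:=T_{y}\mathcal{F}_{y}B$ defines a lagrangian subbundle $K\to B$ of $TN|_{B}$ complementary to $TB$, i.e., a lagrangian splitting of $[N,B]$. The defining condition of a symplectic micromorphism then yields, for every $y\in B$,
\[
T_{(\phi(y),y)}V\;\oplus\;\bigl(T_{\phi(y)}A\oplus K_{y}\bigr)\;=\;T_{\phi(y)}\overline M\oplus T_{y}N.
\]

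\textbf{Step 2. Constructing $\mathcal{F}_{y}A$ and $\Psi_{y}$.} Fix $y\in B$. Since $T_{\phi(y)}A\oplus K_{y}$ sits inside $T_{(\phi(y),y)}(M\times\mathcal{F}_{y}B)$, Step 1 implies that $V$ is transverse to the coisotropic submicrofold $M\times\mathcal{F}_{y}B$ at $(\phi(y),y)$; hence $I_{y}:=V\cap(M\times\mathcal{F}_{y}B)$ is a submanifold germ of dimension $\dim A$ through that point. The null foliation of $M\times\mathcal{F}_{y}B$ has leaves $\{m\}\times\mathcal{F}_{y}B$, and any vector of $T_{(\phi(y),y)}I_{y}$ lying in such a leaf is of the form $(0,v)$ with $v\in K_{y}$, so it lies in $TV\cap\Lambda=\{0\}$. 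Thus the projection $p_{M}\colon I_{y}\to M$ is a local diffeomorphism, and its image $\mathcal{F}_{y}A$ is a lagrangian submanifold germ of $M$ by the usual symplectic reduction of $V$ along $M\times\mathcal{F}_{y}B$. An entirely analogous argument shows $T_{\phi(y)}\mathcal{F}_{y}A\cap T_{\phi(y)}A=\{0\}$ (if $w$ lies in this intersection, its unique lift $(w,v)\in TI_{y}$ sits in $TV\cap\Lambda=0$, forcing $w=0$), so $[\mathcal{F}_{y}A,\{\phi(y)\}]$ is a transverse lagrangian submicrofold. Setting $\Psi_{y}:=p_{N}\circ p_{M}^{-1}$ gives a smooth map germ $\Psi_{y}\colon[\mathcal{F}_{y}A,\{\phi(y)\}]\to[\mathcal{F}_{y}B,\{y\}]$ whose graph equals $I_{y}$.

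\textbf{Step 3. Smoothness, covering, and uniqueness.} Smooth dependence of $\mathcal{F}_{y}A$ and $\Psi_{y}$ on $y\in B$ follows from the parametric implicit function theorem applied to the equations locally cutting out $I_{y}$ inside $V$: since transversality is an open condition and holds at every point of $\graph\phi$, the construction assembles into a smooth family on a neighborhood of $\graph\phi$, for suitable representatives. The equality $V=\bigcup_{y\in B}I_{y}=\bigcup_{y\in B}\graph\Psi_{y}$ then holds on such a neighborhood, because the germ of $\bigcup_{y\in B}\mathcal{F}_{y}B$ around $B$ is all of $N$. Uniqueness of $\mathcal{F}A$ and of the $\Psi_{y}$ is immediate from the forced formula $\mathcal{F}_{y}A=p_{M}\bigl(V\cap(M\times\mathcal{F}_{y}B)\bigr)$ and the fact that the graph of a map determines the map. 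The main technical point is passing from the pointwise transversality in Step 1 to a uniform family of local diffeomorphisms on a single neighborhood of $\graph\phi$; this is a routine parametric implicit function argument, but is the step where the germ-level bookkeeping requires the most care.
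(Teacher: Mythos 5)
Your argument is correct, but it is not the route the paper takes. The paper's proof is categorical: it defines $\mathcal{F}_{y}A$ as the composition $\big(\big[\mathcal{F}_{y}B\big],c_{y}\big)\circ([V],\phi)$ with the symplectic micromorphisms $\big(\big[\mathcal{F}_{y}B\big],c_{y}\big):[N,B]\rightarrow E$ determined by the fibration (Remark \ref{rem: composition}), which yields uniqueness for free and guarantees that each $\mathcal{F}_{y}A$ is a transverse lagrangian submicrofold because compositions of symplectic micromorphisms are again symplectic micromorphisms; the fact that the slice $V\cap(M\times\mathcal{F}_{y}B)$ is a graph is then deduced from the monicity of composition (Proposition \ref{pro: simplicity}), itself established via local generating functions and the implicit function theorem. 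You instead argue directly from the defining transversality condition: you convert $\mathcal{F}B$ into a lagrangian splitting $K$, deduce that $V$ is transverse to the coisotropic $M\times\mathcal{F}_{y}B$, obtain $\mathcal{F}_{y}A$ by symplectic reduction, and check by linear algebra that the slice is a graph and that $\mathcal{F}_{y}A$ is lagrangian and transverse to $A$. This is more self-contained -- it avoids coordinates, generating functions and the monicity proposition entirely, and in effect re-proves by hand the instance of monicity that the paper quotes -- at the cost of having to redo the smooth-dependence and covering bookkeeping that the paper inherits from its composition machinery. Two small points to tighten: $p_{M}\colon I_{y}\rightarrow M$ is an embedding germ onto $\mathcal{F}_{y}A$ (a diffeomorphism onto its image), not a local diffeomorphism onto $M$, since $\dim I_{y}=\dim A=\tfrac{1}{2}\dim M$; and the claim that $\bigcup_{y}\mathcal{F}_{y}B$ fills a neighborhood of $B$ in $N$ deserves a word of justification (it follows from the transversality of each fiber together with the smooth parametrization, exactly as in the tubular neighborhood theorem), although the paper uses the same fact silently.
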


\begin{rem} \label{rem: composition}
The lagrangian fibration $\mathcal{F}B$ of $[N,B]$ in the theorem above gives rise to the collection of symplectic micromorphisms
\[
	\big(\big[\mathcal{F}_yB\big], c_y\big): [N,B]\longrightarrow E,\quad y\in B,
\]
where $E=\{0\}\times \{*\}$ is the cotangent microbundle of the one-point manifold $\{*\}$. The core maps are the constant functions $c_y:\{*\}\rightarrow B$ that map the unique point of the core of $E$ to each $y$. The corresponding lagrangian fibration $\mathcal{F}A$ along $\phi$ is then obtained from the symplectic micromorphism $([V],\phi)$ by composition
\begin{equation}
	\big(\big[\mathcal{F}_yA\big], c_{\phi(y)}\big) = \big(\big[\mathcal{F}_yB\big], c_y\big)\circ ([V],\phi ).\label{eq: lag fib as mic}
\end{equation}
\end{rem}

\begin{proof}
The uniqueness of the decomposition is immediate from Remark \ref{rem: composition}. As for the existence, let $\mathcal{F}B$ be a lagrangian fibration of $[N,B]$ and consider the lagrangian fibration $\mathcal{F}A$ along $\phi$ as defined by \eqref{eq: lag fib as mic}. We denote by $R_{y}$ the intersection of $V$ with $P\times\mathcal{F}_{y}B$. By Remark \ref{rem: composition}, we have that $R_{y}\subset\mathcal{F}_{y}A\times\mathcal{F}_{y}B$ and that, for appropriate representatives, 
\[
	V=\bigcup_{y\in B}R_{y}.
\]
We need to show that $R_{y}$ is the graph of a map. For this, we consider the composition of the symplectic micromorphisms
\[
	[M,A]\overset{([V],\phi)}{\longrightarrow}[N,B]\overset{\mathcal{F}_{y}B}{\longrightarrow}E.
\]
A point $z\in\mathcal{F}_{y}B\circ V$ is of the form $z=(p,(0,\star))$, where $p\in\mathcal{F}_{y}A$. Since, by Proposition \ref{pro: simplicity}, the composition of two symplectic micromorphisms is alway monic the set $\mathcal{M}_{z}$ is a singleton whose unique point is in $\mathcal{F}_{y}B$. We denote this point by $\Psi_{y}(p)$, and this gives us a map $\Psi_{y}:\mathcal{F}_{y}A\rightarrow\mathcal{F}_{y}B$ whose graph is, by definition, $R_{y}$. 
\end{proof}

\begin{example}
\label{exa: local normal form}Let $([V],\phi):\Cot U_{1}\rightarrow\Cot U_{2}$ be a symplectic micromorphism between cotangent microbundles over the open subsets $U_{1}\subset\R^{k}$ and $U_{2}\subset\R^{l}$. The local form theorem tells us that there is a representative $V\in[V]$ that can be described by a generating function $[f]$ as in \eqref{eq: local form}. Thus $V\cap(\Cot U_{1}\times\Cot_{x_{2}}U_{2})$ is the locus of points of the form
\[
	\Big(p_{1},\phi(x_{2})+\partial_{p}f(p_{1},x_{2}),\,(\Cot_{x_{2}}\phi)p_{1}+\partial_{x}f(p_{1},x_{2}),x_{2}\Big),
\]
where $x_{2}$ is fixed while $p_{1}$ is free to vary in a neighborhood of $0$ in $\R^{k}$. Now, for each $x_{2}\in U_{2}$, we may define the symplectomorphism germ
\begin{eqnarray*}
   [\Cot U_{1},\phi(x_{2})] & \overset{R_{x_{2}}}{\longrightarrow} & [\Cot U_{1},\phi(x_{2})],\\
   (p_{1},x_{1})                 & \mapsto                                             & x_{1}+\partial_{p}f(p_{1},x_{2}).
\end{eqnarray*}
The image of the cotangent bundle fiber over $\phi(x_{2})$ by $R_{x_{2}}$ defines a fiber of our lagrangian fibration of $\Cot U_{1}$ along the core map.  Explicitly, it is
\[
	L_{x_{2}}=\Big\{\big(p_{1},\phi(x_{2})+\partial_{p}f(p_{1},x_{2})\big):p_{1}\in W\Big\},
\]
where $W$ is a suitable neighborhood of $0$ in $\Cot_{\phi(x_{2})}U_{1}$. $L_{x_2}$ is a lagrangian submanifold of $\Cot U_{1}$ that intersects the zero section transversally in $\phi(x_{2})$. Now the map $\Psi_{x_{2}}$ from $L_{x_{2}}$ to $\Cot_{x_{2}}N$ is given by the formula 
\[
	\Psi_{x_{2}}\big(p_{1},\phi(x_{2})+\partial_{p}f(p_{1},x_{2})\big)=\big((\Cot_{x_{2}}\phi)p_{1}+\partial_{x}f(p_{1},x_{2}),x_{2}\big).
\]
\end{example}

\begin{example}
Consider the symplectic micromorphism $([V],\phi)$ from $\R^{2}=\R_{p}\oplus\R_{x}$ to itself whose core is the constant function $\phi(x)=0$ and whose generating function is given by $f(p,x)=p^{2}x$. The map 
\[
	(p,x)\mapsto\big((p,2xp),(p^{2},x)\big)
\]
parametrizes a representative $V\in[V]$. Consider the straight lines
\[
	l_{x}:=\big\{(p,2xp):p\in\R\big\}
\]
in $\R^{2}$ through the origin. The intersection of $V$ with $\R^{2}\times(\R_{p}\oplus\{0\})$ is the graph of the map $\Psi_{x}(p,2xp)=(p^{2},x)$ that folds $l_{x}$ at the origin into a half-line and maps this ray linearly into the half-line parallel to the $p$-axis and passing through $(0,x)$.
\end{example}

\begin{example}
Let $R_{A}:\Cot A\rightarrow\Cot A$ be a symplectomorphism germ that fixes the core; i.e., the core of the corresponding symplectic micromorphism is the identity map on $A$. For each $x\in A$, $R_{A}$ defines two lagrangian distributions, 
\[
	\mathcal{F}_{x}A:=R_{A}(\Cot_{x}A)\quad\textrm{and}\quad\mathcal{B}_{x}A:=R_{A}^{-1}(\Cot_{x}A),
\]
the forward and backward images of the cotangent fibers via $R_{A}$. Clearly, the restriction of $R_{A}$ to the backward distribution yields the decomposition $\Psi_{x}:\mathcal{B}_{x}A\rightarrow\Cot_{x}A$ associated with the cotangent fiber distribution. Now, consider a symplectic micromorphism of the form
\[
	([V],\phi)=\graph[R_{A}^{-1}]\circ\Cot\phi\circ\graph[R_{B}].
\]
where $R_{A}$ and $R_{B}$ are symplectomorphism germs on respectively $\Cot A$ and $R_{B}$ of $\Cot B$ fixing the cores, and where $\Cot\phi:\Cot A\rightarrow\Cot B$ is a cotangent lift. Then we obtain a decomposition given by the following diagram:

\begin{diagram} 
   T^*_{\phi(x_2)}A              & \rTo^{T^*_{x_2}\phi}   & T^*_{x_2} B \\
   \uTo^{R_A^{-1}}               &                                   & \dTo_{R_B}  \\
   \mathcal{B}_{\phi(x_2)}A & \rTo_{\Psi_{x_2}}        & \mathcal{F}_{x_2}B
\end{diagram}

\end{example}

\section{Hamiltonian flows\label{sec:Applications-to-classical}}

In this section, we explain how symplectic microgeometry is a natural framework for the Hamilton-Jacobi theory of hamiltonian flows through their local generating functions. We show that there is a canonical symplectic micromorphism,
\[
	\rho_{H}:\Cot\R\otimes\Cot Q\rightarrow\Cot Q,
\]
the evolution micromorphism, that encodes the short-time dynamics of an hamiltonian system $H:\Cot Q\rightarrow\R$. The local generating function of $\rho_{H}$ in an admissible local chart coincides with the solution of the Hamilton-Jacobi equation for the generating function of the hamiltonian flow in this chart. 

From a different perspective, the evolution micromorphism allows us to formulate the short-time evolution in classical mechanics in a purely categorical way. Namely, we show that $\rho_{H}$ turns $\Cot Q$ into a module over $\Cot\R$ and that all $\Cot\R$-modules arise from hamiltonian flows (with possibly \textit{time}-\textit{dependent} hamiltonians).

\subsection{The evolution micromorphism}

Consider a hamiltonian system $H:\Cot Q\rightarrow\R$. The time evolution $\Psi_{t}:\Cot Q\rightarrow\Cot Q$ generated by $H$ is the flow of the Hamiltonian vector field $X_{H}$. It produces a lagrangian submanifold $W_{H}$ of $\overline{\Cot\R}\times\overline{\Cot Q}\times\Cot Q$, which we call the \textbf{evolution submanifold}, and which is defined as 
\begin{equation}
	W_{H}:=\bigg\{\Big(\big(t,H(\Psi_{t}(z))\big),z,\Psi_{t}(z)\Big):\; t\in I_z,\, z\in\Cot Q\bigg\},\label{eq: evolution submanifold}
\end{equation}
where $I_z$ is the maximal interval on which $\Psi_{t}(z)$ is defined. 

\begin{rem}
The core $\R$ of the $\Cot \R$-factor in the product $\overline{\Cot\R}\times\overline{\Cot Q}\times\Cot Q$ above corresponds to the possible energy levels $E\in\R$ of the system and not to the possible times. To remind us of this fact, we will denote this core by $\Energy$ instead of $\R$. A point $t\in\Cot_{E}\Energy$ in the fiber represents the time, in accordance with the physical time-energy duality. 
\end{rem}

Now we introduce the following map
\begin{eqnarray}
   J:Q & \rightarrow & \Energy\times Q,\label{eq: U-map}\\
   q & \mapsto & (H(0,q),q).\nonumber 
\end{eqnarray}

\begin{rem}
Observe that for ``mechanical'' hamiltonians $H(p,q)=\frac{1}{2}g(q)(p,p)+V(q)$ coming from a metric $g$ and a potential $V$ on $Q$, the map $J$ is essentially the same as the potential. 
\end{rem}

We see that the points in $W_{H}$ with $t=0$ and $z=(0,q)$ lie in the graph of $J$, and therefore it makes sense to consider the lagrangian submicrofold $[W_{H},\graph J]$. A straightforward check shows that, if restricted to sufficiently small times and momenta, the evolution submanifold intersects the zero section cleanly in $\graph J$. This yields the following result:

\begin{prop}
Let $H:\Cot Q\rightarrow\R$ be an hamiltonian system. Then the germ of the evolution submanifold \eqref{eq: evolution submanifold} around the graph of \eqref{eq: U-map} yields a symplectic micromorphism
\begin{eqnarray}
	\big([W_{H}],J\big):\Cot\Energy\otimes\Cot Q & \longrightarrow & \Cot Q,\label{eq: evolution micromorphism}
\end{eqnarray}
which we will refer to as the \textbf{evolution micromorphism} of the hamiltonian system. 
\end{prop}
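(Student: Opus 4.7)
The plan is to invoke Theorem \ref{pro: sympl. micromorph. as trans. lags.}: after the Schwartz transform, a symplectic micromorphism $\Cot\Energy\otimes\Cot Q\to\Cot Q$ with core map $J$ is exactly a clean lagrangian submicrofold of $\Cot(\Energy\times Q\times Q)$ with core $\graph J$. Thus it suffices to verify that $W_H$ is lagrangian, that its intersection with the zero section equals $\graph J$ in a neighborhood of $\graph J$, and that this intersection is clean. The lagrangian character of $W_H$ is standard, coming from $\Psi_t^{*}\omega_{\Cot Q}=\omega_{\Cot Q}$ together with the time-energy pairing. For the set-level intersection, a point $((t,H(\Psi_t(z))),z,\Psi_t(z))$ lies in the zero section iff $t=0$ and both $z$ and $\Psi_t(z)$ have vanishing momentum, which forces $z=(0,q)$ and produces exactly $((0,H(0,q)),(0,q),(0,q))=(J(q),q)$.

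The main content is the tangent-space cleanliness check. Differentiating the parametrization $(t,z)\mapsto((t,H(\Psi_t(z))),z,\Psi_t(z))$ at $(0,(0,q))$ in direction $(\dot t,\dot z)$ produces the tangent vector
\[
\Big(\big(\dot t,\ dH(0,q)\cdot(\dot t\,X_H(0,q)+\dot z)\big),\ \dot z,\ \dot t\,X_H(0,q)+\dot z\Big).
\]
The crucial simplification is $dH(X_H)=\omega(X_H,X_H)=0$, which collapses the energy slot to $dH(0,q)\cdot\dot z$. Intersecting with the zero section's tangent space then forces $\dot t=0$ (from the $\Cot\Energy$ fiber) and $\dot z\in 0\oplus T_q Q$ (from the second $\Cot Q$ fiber); the third-factor condition becomes automatic. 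What survives is $\{((0,\partial_q H(0,q)\dot q),(0,\dot q),(0,\dot q)):\dot q\in T_q Q\}$, which is precisely $T_{(J(q),q)}(\graph J)$ since $dJ(q)\dot q=(\partial_q H(0,q)\dot q,\dot q)$.

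The main obstacle will be identifying and using this cancellation: without the conservation law $\{H,H\}=0$, the $\dot t$-direction would leak into the energy slot and the clean intersection property would fail, so this is where the hamiltonian nature of the dynamics genuinely enters. One will also need to restrict to sufficiently small $t$ and momenta to ensure that the parametrization is actually an embedding whose image meets the zero section only along $\graph J$, which is a straightforward tubular-neighborhood argument. Once cleanliness is established, Theorem \ref{pro: sympl. micromorph. as trans. lags.} applies directly and produces the evolution micromorphism $([W_H],J)\colon\Cot\Energy\otimes\Cot Q\to\Cot Q$.
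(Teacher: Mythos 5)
Your proposal is correct and follows essentially the same route as the paper, which merely declares the clean intersection of $W_H$ with the zero section along $\graph J$ to be ``a straightforward check'' and then invokes the correspondence between symplectic micromorphisms and clean lagrangian submicrofolds; your tangent computation supplies exactly that check. One small point of emphasis: in the cleanliness verification the zero-section condition on the $\Cot\Energy$ factor already forces $\dot t=0$, so the term involving $dH(X_H)$ drops out for that reason alone --- the conservation law $dH(X_H)=0$ is not what rescues cleanliness, but it (together with $\Psi_t^*\omega=\omega$) is what makes $W_H$ lagrangian, which you correctly list as the other ingredient.
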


\begin{rem}
In the discussion above, we started with a global hamiltonian $H:\Cot Q\rightarrow \R$ and we obtained from it its evolution micromorphism $([W_H],J)$, which encodes the short-time dynamics of the hamiltonian system. By doing so, we lost some information since two different global hamiltonians will yield the same evolution submicrofold if their germs around the zero section in $\Cot Q$ coincide. Of course, it would have sufficed to start with a hamiltonian function germ $[H]:[\Cot Q,Z_Q]\rightarrow \R$ and then consider the induced germ of flow to define the evolution submicrofold.
\end{rem}

\subsection{The Hamilton-Jacobi equation}

\subsubsection{Time-independent hamiltonians}

We can always choose the admissible charts for the evolution micromorphism \eqref{eq: evolution micromorphism} to be of the form
\begin{equation}
	\big((t,E),(p,q),(P,Q)\big)\in\Cot\Energy\times\Cot U\times\Cot U,\label{eq: admin local chart}
\end{equation}
where $U$ is a coordinate patch of $Q$. The local form theorem tells us that there exist a unique local generating function $S(t,p,Q)$ of the form
\begin{equation}
	S(t,p,Q)=pQ+tH(0,Q)+f(t,p,Q),\label{eq: S=pQ +tH +f}
\end{equation}
which is defined for sufficiently small times and momenta, and such that 
\begin{equation}
W_{H}^{U}=\Big\{\big((t,\partial_{t}S(t,p,Q)),(p,\partial_{p}S(t,p,Q)),(\partial_{q}S(t,p,Q),Q)\big)\Big\},\label{eq: evolution mic. local form}
\end{equation}
where the variables $(t,p,Q)$ in \eqref{eq: evolution mic. local form} range in a suitable neighborhood of the zero section in $J^{*}(\Cot(\Energy\times Q))$, is a representative of the restriction of evolution micromorphism to the local chart. Comparing \eqref{eq: evolution mic. local form} with \eqref{eq: evolution submanifold}, we see that $S$ satisfies the following partial differential equation
\begin{equation}
	\partial_{t}S(t,p,Q)=H(\partial_{q}S(t,p,Q),Q)\label{eq: HJ equation}
\end{equation}
with initial condition $S(0,p,Q)=\langle p,Q\rangle$. Equation \eqref{eq: HJ equation} is known as the \textbf{Hamilton-Jacobi equation} (for generating functions of type $pQ$) of the hamiltonian system $(\Cot Q,H)$. Here, the existence of a solution is an immediate consequence of the local form theorem applied to the evolution micromorphism.

Now suppose that we are given a generating function $S(t,p,Q)$ that is nondegenerate in the following sense:
\begin{equation}
	\det\big|\frac{\partial^{2}S(0,0,Q)}{\partial p\,\partial Q}\big|\neq0.\label{eq: non-deg for S}
\end{equation}
In this case, the implicit function theorem applied to the function $K(t,p,Q)=\partial_{p}S(t,p,Q)-q$ around the point $(0,0,q)$ guarantees that the following implicit system 
\begin{eqnarray}
	q & = & \partial_{p}S(t,p,Q)\label{eq: x_t}\\
	P & = & \partial_{q}S(t,p,Q)\label{eq: p_t}
\end{eqnarray}
has a unique solution $(P(t),Q(t))$ for each $(t,p,q)$ with $t$ and $p$ small enough. This generates a flow $\Psi_{t}(p,x)=(P(t),Q(t))$ on a neighborhood of the zero section of $\Cot U$. The Hamilton-Jacobi theorem tells us that, if $S$ further satisfies \eqref{eq: HJ equation}, then this flow coincides with the flow generated on the local chart $\Cot U$ by the hamiltonian flow of $H$. 

For the local generating function \eqref{eq: S=pQ +tH +f} of the evolution micromorphism \eqref{eq: evolution micromorphism}, the nondegeneracy \eqref{eq: non-deg for S} is clear from \eqref{eq: S=pQ +tH +f}, and the fact that the implicit Equations \eqref{eq: x_t} and \eqref{eq: p_t} generate the flow is immediate from a comparison of \eqref{eq: evolution mic. local form} with \eqref{eq: evolution submanifold}. 

To conclude this paragraph, we will exhibit some nice relations between the hamiltonian and the local generating functions of its evolution micromorphism.  From the Hamilton-Jacobi equation together with \eqref{eq: S=pQ +tH +f}, we obtain
\begin{eqnarray*}
	H(p,q)    & = & \partial_{t}S(0,p,q),\\
	S(t,p,Q) & = & pQ+\int_{0}^{t}H(P(s),Q(s))ds.
\end{eqnarray*}
Moreover, a straightforward Taylor expansion in the time variable yields
\begin{eqnarray*}
	S(t,p,Q) & = & pQ+H(p,Q)t+\partial_{x}H(p,Q)\partial_{p}H(p,Q)\frac{t^{2}}{2}+\cdots
\end{eqnarray*}

\subsubsection{Time-dependent hamiltonians}

Our next task is to characterize the symplectic micromorphisms from $\Cot\Energy\otimes\Cot Q$ to $\Cot Q$ that come from the evolution
submanifolds of (possibly time-dependent) hamiltonian
systems. We proceed by imposing obvious conditions in terms of their
local generating functions.

Let $([W],J):\Cot\Energy\otimes\Cot Q\rightarrow\Cot Q$ be a symplectic micromorphism with core map $J(Q)=(J_{E}(Q),J_{Q}(Q))$. The general form for its local generating function in an admissible local chart as in \eqref{eq: admin local chart} is
\[
	S(t,p,Q)=tJ_{E}(Q)+pJ_{Q}(Q)+f(t,p,Q),
\]
where $f$ is some function that vanishes, as well as its derivatives in the $t$ and $p$ directions, when $t=p=0$. This implies in general that
\[
   \frac{\partial^{2}S(0,0,Q)}{\partial p\,\partial Q}=\left(
                                                                                     \begin{array}{cc}
                                                                                       0                        & \nabla J_{Q}(Q)\\
                                                                                       \nabla J_{Q}(Q) & 0
                                                                                      \end{array}
                                                                               \right),
\]
because of the vanishing of $f$. Now the nondegeneracy condition \eqref{eq: non-deg for S} is satisfied iff $J_{Q}$ is a local diffeomorphism. If this is the case, Equations \eqref{eq: x_t}-\eqref{eq: p_t} define, as before, a flow $\Psi_{t}(p,q)=(P(t),Q(t))$ on $\Cot U$ such that
\[
	W_{U}=\Big\{\Big((t,\partial_{t}S(t,p,Q(t))),(p,q),\Psi_{t}(p,q)\Big):t,p\textrm{ small}\Big\}.
\]
If we also want $\Psi_{t}$ to be the identity map when $t=0$, we need to further impose that $S(0,p,Q)=pQ$, or equivalently, that
\[
	pJ_{Q}(Q)+f(0,p,Q)=pQ.
\]
Differentiating this last equation with respect to $p$ and setting $p=0$, we are left we no choice but to require that $J_{Q}(Q)=Q$. Now we can define the \textit{time-dependent} hamiltonian
\[
	H_{t}(p,q):=\partial_{t}S(t,p(t),q),
\]
where $(p(t),q(t)):=\Psi_{t}^{-1}(p,q)$. This way, the generating function satisfies by definition the time-dependent Hamilton-Jacobi equation
\[
	\partial_{t}S(t,p,Q)=H_{t}(\partial_{x}S(t,p,Q),Q)
\]
with the same initial condition for $S$ as in the time-independent case. As a consequence, $W_{U}$ is now of the form \eqref{eq: evolution submanifold} with the hamiltonian $H$ replaced with the time-dependent one $H_{t}$. The Hamilton-Jacobi theorem now tells us now that $\Psi_{t}$ is the flow of $H_{t}$ in $\Cot U$. Because our local charts are of the form
\[
	\id_{\Cot\Energy}\times\Cot\chi\times\Cot\chi:\Cot\Energy\times\Cot U\times\Cot U\longrightarrow\Cot\Energy\times\Cot Q_{|\tilde{U}}\times\Cot Q_{|\tilde{U}},
\]
where $\chi:\tilde{U}\rightarrow U$ is a local coordinate patch $\tilde{U}\subset Q$, the hamiltonian flows that are induced on the coordinate patches $\Cot Q_{|\tilde{U}}$ coincide on their overlaps. Thus this defines a time-dependent hamiltonian system $H_{t}:\Cot Q\rightarrow\R$ (where $H_{t}$ is only defined for small times and momenta) whose evolution microfold is precisely $([W],J)$. Let us sum up what we have proven so far:

\begin{prop}\label{pro: time-independent hamiltonian}A symplectic micromorphism
\[
	\rho:\Cot\Energy\otimes\Cot Q\rightarrow\Cot Q
\]
is the evolution micromorphism of a (possibly time-dependent) hamiltonian
flow if and only if
\[
	\big(\core\,\rho\big)(x)=(U(x),x)
\]
for some smooth function $U:Q\rightarrow\R$. 
\end{prop}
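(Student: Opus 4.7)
The proof splits naturally into two directions, one of which is essentially a definition-chase and one of which requires the analysis that was carried out in the paragraphs immediately preceding the statement.

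For the ``only if'' direction, I would simply unpack the definitions: if $\rho=([W_H],J)$ is the evolution micromorphism of a hamiltonian $H:\Cot Q\rightarrow\R$, then the core map was defined in \eqref{eq: U-map} as $J(q)=(H(0,q),q)$, so taking $U(q):=H(0,q)$ gives the required form $(\core\,\rho)(q)=(U(q),q)$.

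For the ``if'' direction, the plan is to reconstruct a time-dependent hamiltonian locally from $\rho$ and then glue. Assume $\core\,\rho$ has the form $(U(x),x)$ for some $U:Q\to\R$, so in the notation $J(Q)=(J_E(Q),J_Q(Q))$ preceding the proposition we have $J_E=U$ and, crucially, $J_Q=\id$. Fix an admissible local chart of the form \eqref{eq: admin local chart} associated with a coordinate patch $\chi:\tilde U\to U\subset Q$. The local form theorem (Theorem \ref{thm: local form}) produces a local generating function
\[
   S(t,p,Q)=tU(Q)+pQ+f(t,p,Q),
\]
with $f$ and its $t$- and $p$-derivatives vanishing at $t=p=0$. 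Because $J_Q=\id$, the nondegeneracy condition \eqref{eq: non-deg for S} holds, so \eqref{eq: x_t}--\eqref{eq: p_t} define a flow $\Psi_t$ on a neighborhood of the zero section of $\Cot U$, and the identity $S(0,p,Q)=pQ$ forces $\Psi_0=\id$. Setting $H_t(p,q):=\partial_tS(t,p(t),q)$ with $(p(t),q(t))=\Psi_t^{-1}(p,q)$, the generating function $S$ satisfies the time-dependent Hamilton-Jacobi equation and, by the Hamilton-Jacobi theorem, $\Psi_t$ is exactly the flow of $H_t$ in the local chart. The local representative of $\rho$ then matches the local evolution submanifold \eqref{eq: evolution submanifold} of $H_t$.

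The main obstacle is the gluing step: one must show that the locally defined $H_t$'s agree on overlaps of coordinate patches so as to assemble into a globally defined germ of a (time-dependent) hamiltonian $H_t:\Cot Q\rightarrow\R$ for small $t$ and small momenta. Here I would use the fact that our admissible local charts have the product form $\id_{\Cot\Energy}\times\Cot\chi\times\Cot\chi$, so the restrictions of $\rho$ to overlapping charts are related by cotangent lifts of the transition functions of $Q$. Since the evolution submanifold of a hamiltonian system, and hence its local generating function, is intrinsic (not coordinate-dependent) up to such cotangent-lift changes of variables, the induced flows patch together to a single flow on a neighborhood of the zero section of $\Cot Q$, whose time-dependent hamiltonian is the required $H_t$. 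By construction the evolution micromorphism of $H_t$ coincides with $\rho$ in each chart and hence globally, which finishes the proof.
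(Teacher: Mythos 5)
Your architecture is the paper's own: the ``only if'' direction is read off from \eqref{eq: U-map}, and the ``if'' direction reconstructs a time-dependent hamiltonian chart by chart from the local generating function $S(t,p,Q)=tU(Q)+pQ+f(t,p,Q)$, invokes the Hamilton--Jacobi theorem, and glues using the product form $\id_{\Cot\Energy}\times\Cot\chi\times\Cot\chi$ of the admissible charts. The gluing step you single out as the main obstacle is handled exactly as you propose and is not where the difficulty lies.

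The genuine gap is the clause ``the identity $S(0,p,Q)=pQ$ forces $\Psi_0=\id$'': the identity $S(0,p,Q)=pQ$ does not follow from the hypothesis $J_Q=\id$. The local form theorem only guarantees that $f$ and its derivatives in the $t$ and $p$ directions vanish at $t=p=0$; it imposes nothing on $f$ along the rest of the slice $\{t=0\}$. Take $Q=\R$ and $f(t,p,Q)=p^{2}$: the core of the resulting micromorphism is $(U(Q),Q)$, but $S(0,p,Q)=pQ+p^{2}$ generates the shear $\Psi_{0}(p,q)=(p,\,q-2p)\neq\id$, so the $t=0$ slice of $W$ is not the diagonal and $W$ is not of the form \eqref{eq: evolution submanifold} for any $H_{t}$; it is an evolution submanifold precomposed with a fixed nontrivial symplectomorphism. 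The derivation preceding the Proposition is more guarded at exactly this point: it says that if one \emph{also} wants $\Psi_{0}=\id$ one must \emph{further impose} $S(0,p,Q)=pQ$, and it deduces $J_{Q}=\id$ as a necessary consequence of that imposition, not as an equivalent reformulation. So the core condition is necessary but, as the example shows, not sufficient on its own; what fills the gap is the unitality condition $\rho\circ(e_{\Energy}\otimes\id)=\id$ introduced in the next subsection, which is precisely the statement that the $t=0$ slice of $W$ is the identity relation, and hence that $f(0,p,Q)\equiv 0$ in every admissible chart. To prove the Proposition as literally stated you would need to derive $f(0,p,Q)\equiv 0$ from the core hypothesis alone, and the example above shows this cannot be done.
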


\subsection{Categorical mechanics}

We want to categorize the evolution micromorphisms at a purely categorical level. To give an indication as where we are aiming at, observe that the unitality condition
\[
	\rho\circ(e_{\Energy}\otimes\id_{\Cot Q})=\id_{\Cot Q}
\]
for the symplectic micromorphism $\rho:\Cot\Energy\otimes\Cot Q\rightarrow\Cot Q$ ($e_{\mathcal{E}}$ is the unique symplectic micromorphism from the cotangent microbundle of the point to $\Cot\Energy$) implies\footnote{Namely, at the level of the cores, the unitality condition imposes that
\[
	(pr_{\Energy}\times\id_{Q})\circ\core\,\rho=\id_{Q},
\]
where $\id_{Q}:Q\rightarrow Q$ is the core of $\id_{\Cot Q}$ and where the projection $pr_{\Energy}:\Energy\rightarrow\{*\}$ is the core of $e_{\Energy}$. If we denote the two components of the core map of $\rho$ by $(\Core\rho)(q)=(U(q),V(q))$, we see that this last equation is satisfied if and only if $V(q)=q$.} that $\big(\core\,\rho\big)(x)=(U(x),x)$ for some function $U:Q\rightarrow\R$. 

Therefore, thanks to Proposition \ref{pro: time-independent hamiltonian}, the unitality condition, which is purely categorical, already singles out the class of $\rho$ coming from evolution micromorphisms of  possibly \textit{time-dependent} hamiltonian systems. The time-independent case is more subtle, and requires us to look at $\Cot\Energy$ in a different way. 

First of all, consider the Lie algebra $\mathcal{T}$of the time translation group $(\R,+)$, which is the abelian Lie algebra on $\R$. We can identify $\mathcal{E}$ with the dual of $\mathcal{T}$. As the dual of a (trivial) Lie algebra, $\mathcal{E}$ can be seen as a Poisson manifold endowed with the zero Poisson structure. We call $\mathcal{T}$ the \textit{Lie algebra of time} and $\mathcal{E}$ the \textit{Poisson manifold of energy}. The cotangent microbundle $\Cot\Energy=\mathcal{T}\times\mathcal{E}$ is a symplectic groupoid (see \cite{Weinstein87}) with source and
target maps coinciding with the bundle projection; the space of composable pairs is $\Cot\Energy\oplus\Cot\Energy$, and the groupoid product is the addition of times in a fiber of constant energy:
\begin{eqnarray*}
	m_{\Energy}\big((t_{1},E),(t_{2},E)\big) & = & (t_{1}+t_{1},E).
\end{eqnarray*}
One verifies that the graph of the groupoid product is a symplectic micromorphism
\begin{eqnarray*}
	\mu_{\Energy}:=\big(\graph[m_{\Energy}],\Delta_{\mathcal{E}}):\Cot\Energy\otimes\Cot\Energy & \longrightarrow & \Cot\Energy,
\end{eqnarray*}
where the core map $\Delta_{\mathcal{E}}$ is the diagonal map on $\mathcal{E}$. It is easy to see that $\mu_{\mathcal{E}}$ further satisfies the following associativity and unitality equations
\begin{gather*}
   \mu_{\Energy}\circ(\mu_{\Energy}\otimes\id)\;=\;\mu_{\Energy}\circ(\id\otimes\mu_{\Energy}),\\
   \mu_{\Energy}\circ(e_{\Energy}\otimes\id)\;=\;\id\;=\mu_{\Energy}\circ(\id\otimes e_{\Energy}),
\end{gather*}
where $ $$e_{\Energy}$ is the unique symplectic micromorphism from the cotangent bundle of the point to $\Cot\Energy$. In other words, $(\Cot\Energy,\mu_{\Energy})$ is a monoid object in the microsymplectic category. 

\begin{prop}\label{pro:Classical mechanics}
A symplectic micromorphism $\rho:\Cot\Energy\otimes\Cot Q\rightarrow\Cot Q$ is the evolution micromorphism of a time-independent hamiltonian system if and only if 
\begin{eqnarray}
	\rho\circ(e_{\Energy}\otimes\id)     & = & \id,\label{eq: unitality}\\
	\rho\circ(\mu_{\Energy}\otimes\id) & = & \rho\circ(\id\otimes\rho),\label{eq: associativity}
\end{eqnarray}
in other words, if and only if $(\Cot Q,\rho)$ is a $\Cot\Energy$-module in the microsymplectic category.
\end{prop}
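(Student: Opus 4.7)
The plan is to handle both directions by unfolding the compositions $\rho\circ(e_\Energy\otimes\id)$, $\rho\circ(\mu_\Energy\otimes\id)$, and $\rho\circ(\id\otimes\rho)$ at the level of their underlying canonical relations, using monicity (Proposition~\ref{pro: simplicity}) to match points one-to-one. The forward direction will then come down to two standard facts about an autonomous hamiltonian flow $\Psi_t$: that $\Psi_0=\id$ and that $\Psi_{t_1+t_2}=\Psi_{t_1}\circ\Psi_{t_2}$, together with conservation of $H$ along its orbits. The backward direction will first use unitality plus Proposition~\ref{pro: time-independent hamiltonian} to write $\rho=([W_{H_t}],J)$ for a (possibly) time-dependent hamiltonian germ $H_t$, then extract from associativity both the one-parameter group property of $\Psi_t$ and the $t$-independence of $H_t$.

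For the forward direction, I would use $W_H=\{((t,H(\Psi_t(z))),z,\Psi_t(z))\}$. The lagrangian of $e_\Energy\otimes\id_{\Cot Q}$ is the zero section in the first factor, i.e.\ $\{(z,(0,E,z)):z,E\}$, and composing forces $t=0$; since $\Psi_0=\id$ one is left with the diagonal of $\overline{\Cot Q}\times\Cot Q$, yielding \eqref{eq: unitality}. Similarly $\mu_\Energy$ imposes $E_1=E_2$ and sends $(t_1,t_2)$ to $t_1+t_2$, and a direct unfolding gives
\[
\rho\circ(\mu_\Energy\otimes\id):\bigl((t_1,E),(t_2,E),z\bigr)\longmapsto \Psi_{t_1+t_2}(z),
\]
\[
\rho\circ(\id\otimes\rho):\bigl((t_1,E_1),(t_2,E_2),z\bigr)\longmapsto \Psi_{t_1}\bigl(\Psi_{t_2}(z)\bigr),
\]
with the energy constraints $E_1=E_2=H(z)$ in both cases (by conservation of $H$ along $\Psi_t$), so \eqref{eq: associativity} follows from $\Psi_{t_1+t_2}=\Psi_{t_1}\circ\Psi_{t_2}$.

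Conversely, unitality forces the core of $\rho$ to be of the form $q\mapsto(U(q),q)$ (by the argument in the footnote to the proposition), so Proposition~\ref{pro: time-independent hamiltonian} gives $\rho=([W_{H_t}],J)$ for some time-dependent hamiltonian germ $H_t$ with flow $\Psi_t$. The same unfolding as in the forward direction, now with $H$ replaced by $H_t$, turns the equality of lagrangian submicrofolds in \eqref{eq: associativity} into the two identities
\[
\Psi_{t_1+t_2}(z)=\Psi_{t_1}(\Psi_{t_2}(z)),\qquad H_{t_1}\bigl(\Psi_{t_1+t_2}(z)\bigr)=H_{t_2}\bigl(\Psi_{t_2}(z)\bigr),
\]
valid for small $t_1,t_2$ and $z$ near the zero section. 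Setting $t_1=0$ in the energy identity gives $H_{t_2}=H_0$ on the image of $\Psi_{t_2}$, and as $(t_2,z)$ varies these images sweep out a neighborhood of the zero section, so $H_t=H_0$ as germs. Thus $\rho$ is the evolution micromorphism of the autonomous hamiltonian germ $H_0$.

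The main obstacle is justifying the set-theoretic unfolding of the three compositions: a priori these are lagrangian submicrofolds, not honest set-level compositions of relations. Monicity (Proposition~\ref{pro: simplicity}) is the key tool here, since it guarantees that for each input there is a unique intermediate point in the composition, so the underlying set really is parameterized by $(t_1,E_1,t_2,E_2,z)$ as above, and matching the two sides of \eqref{eq: associativity} reduces to matching functions of these parameters.
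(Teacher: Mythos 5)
Your proposal is correct and follows essentially the same route as the paper: unitality plus Proposition \ref{pro: time-independent hamiltonian} reduces to a time-dependent hamiltonian germ, and a direct set-level unfolding of the two sides of \eqref{eq: associativity} (justified by monicity) yields exactly the group law $\Psi_{t_1+t_2}=\Psi_{t_1}\circ\Psi_{t_2}$ together with the energy identities forcing $H_t=H_0$. Your explicit appeal to Proposition \ref{pro: simplicity} to legitimize the ``computation with binary relations,'' and your spelled-out forward direction, only make explicit what the paper leaves implicit.
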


\begin{proof}
Let $(\Cot Q,\rho)$ be a $\Cot\Energy$-module. We have already seen that the unitality condition tells us that $\rho$ is the evolution micromorphism of a time-dependent hamiltonian system $H_{t}:\Cot Q\rightarrow\R$. Therefore there is a representative of the form \eqref{eq: evolution submanifold} with $H$ replaced with $H_{t}$. A direct computation with binary relations gives us a representative of the L.H.S of \eqref{eq: associativity},
\[
	\begin{gathered}
		\bigg\{\bigg(t_{2},H_{t_{1}+t_{2}}\big(\Psi_{t_{1}+t_{2}}(z)\big),t_{1},H_{t_{1}+t_{2}}\big(\Psi_{t_{1}+t_{2}}(z)\big),z,\Psi_{t_{1}+t_{2}}(z)\bigg):\, z,t_{1},t_{2}\bigg\},
	\end{gathered}
\]
as well as a representative of its R.H.S., 
\[
	\bigg\{\bigg(t_{2},H_{t_{2}}\big(\Psi_{t_{1}}\circ\Psi_{t_{2}}(z)\big),t_{1},H_{t_{1}}\big(\Psi_{t_{1}}(z)\big),z,\Psi_{t_{2}}\circ\Psi_{t_{1}}(z)\bigg):\, z,t_{1},t_{2}\bigg\}.
\]
Requiring the equality of both sides is equivalent to imposing that $\Psi_{t_{1}}\circ\Psi_{t_{2}}=\Psi_{t_{1}+t_{2}}$ and $H_{t}(z)=H_{0}(z)$ for all $t_{1},t_{2}$ and $t$. In other words, the associativity equation holds iff $\rho$ is the evolution micromorphism of a time-independent hamiltonian system. 
\end{proof}

\begin{rem}
It is straightforward to generalize the proposition above to general $\Cot\Energy$-modules
\[
	\rho:\Cot\Energy\otimes[P,Q]\rightarrow[P,Q]
\]
in the microsymplectic category by using a symplectomorphism germ $[\Psi]:\Cot Q\rightarrow[P,Q]$ coming from the lagrangian embedding theorem.
\end{rem}

\subsection{Classical symmetries}

It is possible to generalize the previous scheme to a general Hamiltonian action of a Lie group $G$ on $\Cot Q$ with momentum map $j:\Cot Q\rightarrow\mathcal{G}^{*}$. In this case, we define the symmetry submanifold to be
\[
	W_{G} :=  \bigg\{\Big(\big(v,j(\exp(v)z)\big),z,\exp(v)z)\Big):\; v\in U,\, z\in\Cot Q\bigg\},
\]
where $U\subset \mathcal{G}$ is the maximal neighborhood of $0$ in the Lie algebra $\mathcal{G}$ on which the exponential mapping $\exp:U \rightarrow G$ is a diffeomorphism on its image. Taking the germ of $W_{G}$ around the graph of $j_{|Q}\times\id_{Q}$, yields a symplectic micromorphism 
\[
	\rho_{G}:\Cot\mathcal{G}^{*}\otimes\Cot Q\longrightarrow\Cot Q
\]
Now, thanks to the exponential mapping, we can define a generating function germ from $\Cot\mathcal{G}^{*}\oplus\Cot\mathcal{G}^{*}$ to $\R$ via the formula
\[
	S_{G}(v,w,\mu)  :=  \Big\langle\mu,\exp^{-1}\big(\exp(v)\exp(w)\big)\Big\rangle,
\]
where $\langle\,,\,\rangle$ is the canonical paring between the Lie algebra and its dual. This generating function germ defines a symplectic micromorphism $\mu_{G}$ from $\Cot\mathcal{G}^{*}\otimes\Cot\mathcal{G}^{*}$ to $\Cot\mathcal{G}^{*}$. One can show that $(\Cot\mathcal{G}^{*},\mu_{G})$ is a monoid and that $(\Cot Q,\rho_{G})$ a $\Cot\mathcal{G}^{*}$-module. This situation will be treated in full details elsewhere. 

As a final comment, let us mention that our approach to hamiltonian flows and symmetries through symplectic micromorphisms is close in spirit to the work of Almeida and Rios \cite{RA2004}, Benenti \cite{Benenti1983} and Zakrzewski \cite{Zakrzewski88} on generalized versions of the Hamilton-Jacobi theory for classical symmetries.

\end{document}